\documentclass[11pt]{amsart}
\usepackage{amssymb,amsmath,amsthm,enumerate}

\sloppy
\frenchspacing


\theoremstyle{plain}
\newtheorem{theorem}{\bf Theorem}[section]
\newtheorem*{theorem*}{Theorem 1.1$'$}
\newtheorem{lemma}[theorem]{\bf Lemma}
\newtheorem{proposition}[theorem]{\bf Proposition}


\numberwithin{equation}{section}


\DeclareMathOperator{\sign}{sign}

\renewcommand{\Re}{\operatorname{Re}}

\newcommand{\abs}[1]{\lvert#1\rvert}
\newcommand{\norm}[1]{\lVert#1\rVert}
\newcommand{\jap}[1]{\langle#1\rangle}

\newcommand{\diag}{\mathrm{diag}}


\newcommand{\bbR}{{\mathbb R}}
\newcommand{\bbC}{{\mathbb C}}
\newcommand{\bbN}{{\mathbb N}}
\newcommand{\calO}{\mathcal{O}}


\begin{document}

\title[Spectral asymptotics for arithmetical matrices]{Spectral asymptotics for a family of arithmetical matrices and connection to Beurling primes}

\author{Titus Hilberdink}
\address{Nanjing University of Information Science and Technology (Reading Academy), 219 Ningliu Road, Pukou District, Nanjing, Jiangsu, 210044 China}\email{t.w.hilberdink@nuist.edu.cn, t.hilberdink@reading.ac.uk}

\author{Alexander Pushnitski}
\address{Department of Mathematics, King's College London, Strand, London, WC2R~2LS, U.K.}
\email{alexander.pushnitski@kcl.ac.uk}


\dedicatory{To Fritz Gesztesy on the occasion of his 70th birthday with warmest wishes}

\keywords{GCD matrix, arithmetical matrix, eigenvalue asymptotics}

\subjclass[2010]{11C20}

\begin{abstract}
We consider the family of arithmetical matrices given explicitly by 
$$
E=\left\{\frac{[n,m]^t}{(nm)^{(\rho+t)/2}}\right\}_{n,m=1}^\infty
$$
where $[n,m]$ is the least common multiple of $n$ and $m$ and the real parameters $\rho$ and $t$ satisfy 
$t>0$, $\rho>t+1$. We prove that $E$ is a compact self-adjoint operator on $\ell^2(\bbN)$ with infinitely many of both positive and negative eigenvalues.
Furthermore, we prove that the ordered sequence of positive eigenvalues of $E$ obeys the asymptotic relation
$$
\lambda^+_n(E)=\frac{\varkappa}{n^{\rho-t}}(1+o(1)), \quad n\to\infty,
$$
with some $\varkappa>0$ and the negative eigenvalues obey the same relation, with the same asymptotic coefficient $\varkappa$. 
We also indicate a connection of the spectral analysis of $E$ to the theory of Beurling primes. 
\end{abstract}

\maketitle

\section{Introduction and main result}

\subsection{Introduction}
For natural numbers $n$ and $m$, we denote by $(n,m)$ the greatest common divisor (GCD) of $n$ and $m$ and by $[n,m]$ the least common multiple (LCM) of $n$ and $m$. In 1875, Smith \cite{Smith} computed the determinant of the $N\times N$ matrix 
\[
\{(n,m)\}_{n,m=1}^N
\]
(the answer is the product $\phi(1)\phi(2)\cdots\phi(N)$, where $\phi$ is Euler's totient function). This set in motion a small but steady stream of follow-up works at the interface of algebra, number theory and spectral theory; see e.g. \cite{BL} and references therein. The matrix $\{(n,m)\}_{n,m=1}^N$ has become known as the GCD matrix and $\{[n,m]\}_{n,m=1}^N$ as the LCM matrix, while $\{(n,m)^{-\tau}\}_{n,m=1}^N$ and $\{[n,m]^{-\tau}\}_{n,m=1}^N$ for $\tau\in\bbR$ have become known as power GCD and power LCM matrices. Furthermore, one can also consider the weighted power GCD and power LCM versions
\[
\left\{\frac{n^\sigma m^\sigma}{[n,m]^\tau}\right\}_{n,m=1}^N
\quad\text{ and }\quad
\left\{\frac{n^\sigma m^\sigma}{(n,m)^\tau}\right\}_{n,m=1}^N
\]
for $\sigma\in\bbR$. 
Because of the relation
\[
(n,m)[n,m]=nm,
\]
the last two forms are equivalent, up to relabelling the parameters $\tau$ and $\sigma$:
\[
\frac{n^\sigma m^\sigma}{(n,m)^\tau}
=
\frac{n^{\sigma-\tau}m^{\sigma-\tau}}{[n,m]^{-\tau}}.
\]
While early work on these and related matrices was mostly algebraic, since 1990s some interest arose in their properties as $N\to\infty$: asymptotics of various norms, largest and smallest eigenvalues etc., see \cite{LS,HL,Hauk,HEL,MH}. Our own interest in this subject arose in connection with multiplicative Toeplitz matrices \cite{H3}, although we will not discuss this connection here.

We are interested in the infinite matrix
\[
E:=\left\{\frac{n^\sigma m^\sigma}{[n,m]^\tau}\right\}_{n,m=1}^\infty
\]
considered as an operator on the Hilbert space $\ell^2(\bbN)$, $\bbN=\{1,2,3,\dots\}$. Observe that the entries of this matrix are homogeneous 
\[
\frac{(kn)^\sigma (km)^\sigma}{[kn,km]^\tau}=k^{-\tau+2\sigma}\frac{n^\sigma m^\sigma}{[n,m]^\tau}, \quad k\in\bbN
\]
of degree $-\rho$, where 
$$
\rho:=\tau-2\sigma.
$$
 In \cite{H2,H}, the case $\tau=1$ was considered; it was proved that $E$ is bounded on $\ell^2(\bbN)$ for $\sigma<\frac12$ and is in the Hilbert-Schmidt class  if $\sigma<\frac14$. In \cite{HP1}, the general case $\tau>0$ was considered; it was proved that for $\rho>0$ and $\tau+\rho>1$, the operator $E$ is positive semi-definite, bounded and compact on $\ell^2(\bbN)$, and the asymptotics of eigenvalues was computed; these results are described in the next subsection.

In this paper, we are interested in the case $\tau<0$. This introduces a new twist in the story: while $E$ is positive semi-definite for $\tau>0$, it is sign-indefinite for $\tau<0$, which can be checked by observing that the determinant of the top left $2\times2$ submatrix of $E$ is negative. 

Our main results are as follows. 
Assuming $\tau<0$, we will show that $E$ is bounded if and only if $\tau-\sigma>1/2$. If $E$ is bounded, it is compact and has infinitely many both positive and negative eigenvalues. We establish the asymptotics of these eigenvalues and show that their distribution is ``asymptotically symmetric'' with respect to reflection $\lambda\mapsto-\lambda$. Finally, in Section~\ref{sec.e} we indicate a connection of the spectral analysis of $E$ to the theory of Beurling primes.

\subsection{Background: the case $\tau>0$}

To set the scene, here we briefly recall the results of \cite{HP1} pertaining to the case $\tau>0$. We start by making two remarks. First, applying $E$ to the first element of the standard basis in $\ell^2$, we obtain the sequence $\{n^{\sigma-\tau}\}_{n=1}^\infty$. So if we want $E$ to be bounded, we need to this sequence to be an element of $\ell^2$, i.e. we need to assume $\tau-\sigma>1/2$. Second, the sequence of diagonal elements of $E$ is 
$\{n^{2\sigma-\tau}\}_{n=1}^\infty=\{n^{-\rho}\}_{n=1}^\infty$ where as above $\rho=\tau-2\sigma$ is the homogeneity exponent. It follows that the necessary condition for the compactness of $E$ is $\rho>0$. 

Now we recall the main result of \cite{HP1}:
\begin{theorem}\label{thm0}
Assume that the exponents $\sigma$ and $\tau$ satisfy 
\[
\rho>0, \quad \tau-\sigma>1/2, \quad \tau>0.
\]
Then $E$ is compact, positive definite and has a trivial kernel. Let $\{\lambda_n(E)\}_{n=1}^\infty$ be the sequence of eigenvalues of $E$, enumerated in non-increasing order with multiplicities taken into account. Then for some positive constant $\varkappa$ (which depends on $\sigma$ and $\tau$) we have
\[
\lambda_n(E)=\frac{\varkappa}{n^{\rho}}+{o}(n^{-\rho}), \quad n\to\infty.
\]
\end{theorem}
The constant $\varkappa$ was computed in \cite{HP1} in two particular cases:
\begin{align*}
\varkappa&=1,\quad \text{ if } \rho=1,
\\
\varkappa&=\sqrt{\zeta(2+4\sigma)}/\zeta(1+2\sigma),\quad \text { if } \rho=1/2;
\end{align*}
here $\zeta$ is the Riemann Zeta function. 

\subsection{Main result}
We come to our main result for $\tau<0$. As already discussed, condition $\tau-\sigma>1/2$ is necessary for the boundedness of $E$, and so we consider the range of parameters
\begin{equation}
\tau<0, \qquad \tau-\sigma>1/2.
\label{a0}
\end{equation}
Note that as a consequence, $\rho>1$.
In order to set up further notation, we state a preliminary form of our main result. 
\begin{theorem}\label{thm.a00}
Assume \eqref{a0}; then $E$ is bounded, compact and self-adjoint on $\ell^2(\bbN_0)$ and has a trivial kernel.
It has infinitely many negative eigenvalues and infinitely many positive eigenvalues. 
\end{theorem}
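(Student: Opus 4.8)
The plan is to exploit the multiplicative structure of $E$. Put $a:=\tau-\sigma$, so that $a>\tfrac12$ by \eqref{a0}, and $s:=-\tau>0$; using $(n,m)[n,m]=nm$ the entries rewrite as $E_{nm}=n^{-a}m^{-a}(n,m)^{-s}$. The key algebraic input is the Möbius decomposition $(n,m)^{-s}=\sum_{d\mid(n,m)}g(d)$, where $g:=\mu*f$ with $f(k):=k^{-s}$; thus $g$ is multiplicative, $g(1)=1$ and $g(p^{\,j})=p^{-js}-p^{-(j-1)s}$ for $j\ge1$. In particular $g(d)\neq0$ for every $d\in\bbN$, while $g(p^{\,j})<0$ for $j\ge1$ — this is the ultimate source of the sign-indefiniteness. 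Introduce $u_d\in\ell^2(\bbN)$ with $(u_d)_n=n^{-a}$ if $d\mid n$ and $(u_d)_n=0$ otherwise; these lie in $\ell^2$ precisely because $2a>1$, and the decomposition of $(n,m)^{-s}$ gives, entrywise, $E=\sum_d g(d)\,u_d\otimes u_d$. To get boundedness and compactness I would check that this series converges absolutely in the trace norm: $\sum_d\abs{g(d)}\,\norm{u_d}^2=\zeta(2a)\sum_d\abs{g(d)}d^{-2a}$, and the last Dirichlet series has a convergent Euler product as soon as $2a>1$ (the generic Euler factor is $1+\calO(p^{-2a})$). Hence $E$ is trace class; being a real symmetric matrix defining a bounded operator, it is self-adjoint.

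For the triviality of the kernel, suppose $Ex=0$ with $x\in\ell^2(\bbN)$ and set $y_n:=x_n n^{-a}$; then $y\in\ell^1$ (again because $2a>1$) and $\sum_n y_n(n,m)^{-s}=0$ for every $m$. Expanding $(n,m)^{-s}$ and interchanging the (now finite, for fixed $m$) sum over $d$ with the sum over $n$, this becomes $\sum_{d\mid m}g(d)Y_d=0$ for all $m$, where $Y_d:=\sum_{d\mid n}y_n$. Möbius inversion over the divisor lattice of $m$ gives $g(m)Y_m=0$, hence $Y_m=0$ for all $m$ since $g(m)\neq0$. Finally a standard finite Möbius identity together with $y\in\ell^1$ (to kill the tail) recovers $y_n=\lim_{N\to\infty}\sum_{d\mid N!}\mu(d)Y_{nd}=0$, so $x=0$.

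The remaining two assertions follow, via the variational (min--max) principle for compact self-adjoint operators, once I exhibit for each $N$ an $N$-dimensional subspace on which $\jap{E\cdot,\cdot}$ is negative definite and another on which it is positive definite. The tool is the elementary observation that if $A,B\subset\bbN$ are such that every element of $A$ is coprime to every element of $B$, then the block $\{E_{nm}\}_{n\in A,\,m\in B}$ has rank one, being equal to $\{n^{-a}m^{-a}\}$. For the negative eigenvalues, fix distinct primes $p_1,\dots,p_N$ and put $\xi_i:=e_1-p_i^{\,a}e_{p_i}$; these are linearly independent, the coprimality observation gives $\jap{E\xi_i,\xi_j}=0$ for $i\neq j$, and a $2\times2$ computation gives $\jap{E\xi_i,\xi_i}=-(1-p_i^{-s})<0$, whence $\jap{E\sum_i c_i\xi_i,\sum_i c_i\xi_i}=-\sum_i c_i^2(1-p_i^{-s})<0$. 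For the positive eigenvalues, fix $2N$ distinct primes $p_1,q_1,\dots,p_N,q_N$ and take $\zeta_i:=e_1-p_i^{\,a}e_{p_i}-q_i^{\,a}e_{q_i}+(p_iq_i)^{a}e_{p_iq_i}$, i.e.\ the ``tensor square'' of the previous construction over the two primes $p_i,q_i$; these are linearly independent, $\jap{E\zeta_i,\zeta_j}=0$ for $i\neq j$ (now because $\sum_n n^{-a}(\zeta_i)_n=0$), and $\jap{E\zeta_i,\zeta_i}=(1-p_i^{-s})(1-q_i^{-s})>0$.

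As for difficulty: the small matrix computations, the Euler-product estimate, and the finite Möbius juggling in the kernel argument are all routine. The one genuinely essential point is that the trace-norm convergence — equivalently, the mere boundedness of $E$ — really does require $a>\tfrac12$ and is \emph{not} accessible through a crude Schur test (which would force $a>1$); the Möbius/rank-one factorization is what makes boundedness work down to the sharp threshold $\tau-\sigma>\tfrac12$. The other step needing an idea rather than a computation is the choice of the test subspaces in the last paragraph, but once the ``coprime block has rank one'' observation is isolated these essentially write themselves.
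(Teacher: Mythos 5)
Your proof is correct and takes a genuinely different route from the paper. The paper derives Theorem~\ref{thm.a00} from the infinite tensor product factorisation $E=\bigotimes_p E_p$ of \eqref{b2}: boundedness, compactness and the pure-point nature of the spectrum are quoted from \cite[Theorem 4.1]{HP1} (restated here as Theorem~\ref{thm.b2}), after which the eigenvalue product formula \eqref{b7} together with the fibre analysis of Lemma~\ref{lma.b1} (each $E_p$ has trivial kernel, exactly one positive eigenvalue and infinitely many negative ones) yields the trivial kernel and the two infinite families of signed eigenvalues at once. Your argument instead expands the GCD factor $(n,m)^{-s}$ by M\"obius into $\sum_{d\mid(n,m)}g(d)$, turning $E$ into an absolutely trace-norm convergent sum of signed rank-one operators $\sum_d g(d)\,u_d\otimes u_d$; this gives $E$ trace class directly (so compactness and self-adjointness are immediate), the trivial kernel follows from a finite M\"obius inversion computation using $g\neq 0$, and the sign-indefiniteness is exhibited by explicit test families that are mutually orthogonal with respect to the form $\jap{E\cdot,\cdot}$, built from the observation that coprime index blocks of $E$ have rank one. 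Your approach is self-contained, does not appeal to \cite{HP1}, and is arguably more elementary; the paper's approach is the one needed anyway to set up the product formula \eqref{b7}, which is indispensable for the eigenvalue asymptotics of Theorem~\ref{thm.a0}. One small remark: your trace-class bound is strictly stronger than boundedness, so the phrase ``equivalently, the mere boundedness of $E$'' slightly overstates matters, although both do hold precisely when $\tau-\sigma>\frac12$ --- and your point that a naive Schur test would only reach $\tau-\sigma>1$ is well taken.
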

We will label the positive eigenvalues of $E$ by $\{\lambda^+_n(E)\}_{n=1}^\infty$, listing them with multiplicities and in non-increasing order. Similarly, we label the positive eigenvalues of $-E$ by $\{\lambda^-_n(E)\}_{n=1}^\infty$, listing them with multiplicities and in non-increasing order. Thus, the eigenvalues of $E$ are $\{\lambda^+_n(E)\}_{n=1}^\infty$ and $\{-\lambda^-_n(E)\}_{n=1}^\infty$.

With these notational preliminaries out of the way, we are ready to state the main result of the paper.

\begin{theorem}\label{thm.a0}
Assume \eqref{a0}. Then for some positive constant $\varkappa$ (which depends on $\sigma$ and $\tau$) we have 
\begin{align*}
\lambda^+_n(E)=\frac{\varkappa}{n^{\rho+\tau}}(1+o(1)), \quad n\to\infty,
\\
\lambda^-_n(E)=\frac{\varkappa}{n^{\rho+\tau}}(1+o(1)), \quad n\to\infty.
\end{align*}
\end{theorem}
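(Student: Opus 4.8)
The plan is to reduce the sign-indefinite case $\tau<0$ to the positive-definite case $\tau>0$ already treated in Theorem~\ref{thm0}, by exhibiting $E$ as (a compact perturbation of) the difference of two operators of the type covered by that theorem. The key algebraic observation is the factorization of the entry: writing $[n,m]=nm/(n,m)$ we get
\[
\frac{n^\sigma m^\sigma}{[n,m]^\tau}
=\frac{(n,m)^{\abs\tau}}{(nm)^{\abs\tau-\sigma}},
\]
so for $\tau<0$ the matrix $E$ is in fact a \emph{weighted power GCD matrix}. Such matrices have a natural multiplicative structure: using the standard identity $(n,m)^a=\sum_{d\mid n,\ d\mid m} J_a(d)$, where $J_a$ is the Jordan totient (so $J_a\ge0$ for $a>0$), one can write $E=\sum_d J_{\abs\tau}(d)\, \xi_d\otimes\xi_d$ with $\xi_d=\{d^{?}/\cdots\}$ supported on multiples of $d$; in other words $E=B^*B$ for an explicit operator $B$, which makes the positive-definiteness transparent — but that is the wrong conclusion here, since we know $E$ is sign-indefinite. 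The resolution is that the relevant exponent $a=\abs\tau$ enters through $J_a$ only after one splits off the $(nm)^{\sigma-\abs\tau}$ weight; the correct statement is that $E$ differs by a rank-controlled (in fact trace-class) amount from the difference of two GCD-type operators. I would therefore first establish the right decomposition $E = E_+ - E_- + R$, where $E_\pm$ are weighted power GCD matrices falling under Theorem~\ref{thm0} with homogeneity exponents both equal to $\rho+\tau$ (note $\rho+\tau = \abs\tau+\rho - 2\abs\tau\cdot 0 \ldots$ — the point is to arrange the two homogeneity degrees to coincide), and $R$ is a perturbation of strictly smaller order, e.g. trace class or with eigenvalues $o(n^{-(\rho+\tau)})$.

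The second step is to extract the eigenvalue asymptotics from this decomposition. For $E_\pm$ individually, Theorem~\ref{thm0} gives $\lambda_n(E_\pm)=\varkappa_\pm n^{-(\rho+\tau)}(1+o(1))$. To combine these into asymptotics for $E=E_+-E_-+R$, I would use a Weyl-type interlacing / Ky Fan argument together with the decay of $R$: the counting functions satisfy, for every $\varepsilon>0$,
\[
n_+(\lambda(1+\varepsilon);E_+) - n_-(\ldots) \le n_+(\lambda;E) \le n_+(\lambda(1-\varepsilon);E_+) + \ldots,
\]
and one must show the cross terms involving $E_-$ and $R$ do not contribute to the leading order. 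The crucial input making this work is that $E_-$, although it contributes infinitely many eigenvalues to $-E$, has its \emph{positive} eigenvalues decaying faster, or more precisely that in the decomposition the positive spectrum of $E$ is asymptotically governed by $E_+$ alone and the negative spectrum by $E_-$ alone. The "asymptotic symmetry" $\varkappa_+=\varkappa_-=:\varkappa$ should then follow because $E_+$ and $E_-$ arise from the same underlying GCD kernel and differ only in a way that is symmetric under $n\leftrightarrow$ some involution — concretely, I expect $E_\pm$ to be unitarily equivalent up to lower-order terms, or to have identical leading symbols in whatever Mellin/multiplicative-Fourier representation underlies the proof of Theorem~\ref{thm0}.

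The main obstacle, I expect, is precisely this last point: controlling the interaction between the positive and negative parts. Unlike the genuinely positive case, here one cannot simply invoke a variational characterization of $\lambda_n^+(E)$ in terms of a single positive operator; the $\min$-$\max$ principle for $\lambda_n^+(E)$ ranges over subspaces on which $E$ is positive, and one must show these are, asymptotically, "close to" the corresponding subspaces for $E_+$. Making the error term $R$ genuinely negligible — i.e. proving it is compact with the right decay rather than merely bounded — will require a careful analysis of the tail of the GCD identity (the contribution of large $d$ in $\sum_d J_{\abs\tau}(d)\xi_d\otimes\xi_d$), and this is where convergence hinges on the hypotheses $\tau<0$ and $\tau-\sigma>1/2$ (equivalently $\rho>1$). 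Establishing $\varkappa_+=\varkappa_-$ rigorously is the second delicate point; if a clean unitary equivalence is not available, one would instead compute both constants explicitly via the same limiting procedure used for $\varkappa$ in Theorem~\ref{thm0} and observe they coincide. I would also double-check the triviality of the kernel (Theorem~\ref{thm.a00}) directly from the GCD factorization, since $E=B^*B$-type reasoning fails in the indefinite case and one instead argues that $Ex=0$ forces, via the homogeneity/Mellin structure, $x=0$.
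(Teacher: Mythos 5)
Your route — split $E$ into a difference $E_+-E_-$ of two positive GCD-type operators and then compare eigenvalues by interlacing — is genuinely different from the paper's, and unfortunately it has several gaps that I do not see how to close.

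First, you never actually produce the decomposition. The Jordan totient identity $(n,m)^a=\sum_{d\mid(n,m)}J_a(d)$ with $J_a\ge0$ requires $a>0$, but here the algebra gives
\[
\frac{n^\sigma m^\sigma}{[n,m]^\tau}=\frac{(n,m)^{\tau}}{(nm)^{\tau-\sigma}},\qquad \tau<0,
\]
so the exponent of $(n,m)$ is \emph{negative} and the Dirichlet-convolution coefficients $\mu*\mathrm{id}^{\tau}$ change sign irregularly (depending on the number of prime factors). There is no canonical split of this sum into two GCD matrices $E_\pm$ of \emph{the same} homogeneity degree $\rho+\tau$ plus a genuinely lower-order remainder; the fragment ``$\rho+\tau = \abs\tau+\rho - 2\abs\tau\cdot 0 \ldots$'' is a placeholder rather than a computation, and the whole first step remains a wish.

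Second, even granting such a decomposition, the interlacing/Ky Fan step would not deliver the result. For compact self-adjoint $A,B\ge0$ with $\lambda_n(A)\sim\varkappa_+ n^{-\gamma}$, $\lambda_n(B)\sim\varkappa_- n^{-\gamma}$, the positive eigenvalues of $A-B$ need not be $\sim\varkappa_+ n^{-\gamma}$; the min--max subspaces for $\lambda_n^+(A-B)$ are not the eigenspaces of $A$, and cancellation can shift the asymptotic constant or even the exponent. One needs an asymptotic near-orthogonality of the spectral subspaces of $A$ and $B$, and you give no mechanism for that. Third, and most seriously, you have no argument for $\varkappa_+=\varkappa_-$. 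You speculate about a unitary equivalence or identical ``symbols,'' but there is nothing concrete, and this equality is the whole point of the theorem.

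The paper takes a multiplicative rather than additive route: $E$ factorizes as a tensor product $\bigotimes_p E_p$ over primes, each local factor $E_p$ has exactly one positive eigenvalue $\approx1$ and a geometric tail of negative eigenvalues starting near $-p^{-(\rho-t)}$ (Lemma~\ref{lma.b1}), and the eigenvalue product formula (Theorem~\ref{thm.b2}) turns the spectrum of $E$ into a ``Beurling integers'' structure. The asymptotics are then read off by forming the Dirichlet-type series $f^\pm(s)=\sum_n\lambda_n^\pm(E)^s$ and applying Wiener--Ikehara. The asymptotic symmetry $\varkappa_+=\varkappa_-$ comes out automatically: $f=f^++f^-$ factors as $\zeta((\rho-t)s)\widetilde f(s)$ and hence has a simple pole at $s=1/(\rho-t)$, while the signed sum $h=f^+-f^-$ factors as $\widetilde h(s)/\zeta((\rho-t)s)$ and is regular there (using $\zeta\ne0$ on the $1$-line). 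Thus $f^+$ and $f^-$ pick up equal residues and equal Tauberian constants. This cancellation mechanism for $h$ is exactly the ingredient your additive scheme lacks a substitute for, and without it the equality of the two constants is out of reach.
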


\textbf{Discussion:}
\begin{enumerate}
\item
Observe that the exponent $\rho+\tau$ here is different from the exponent $\rho$ in Theorem~\ref{thm0}. 
\item
The asymptotic constant $\varkappa$ is the same for positive and negative eigenvalues of $E$. In other words, the distribution of eigenvalues of $E$ is asymptotically symmetric. 
\item
We are not able to compute $\varkappa$ in closed form, although we have an infinite product representation for $\varkappa$ in \eqref{d5}. 
\item
In Section~\ref{sec.e} we discuss a slight improvement of the error term $o(1)$ in the asymptotics.
\end{enumerate}

\subsection{Notation}

We denote by $\jap{\cdot,\cdot}$ the inner product in a Hilbert space. We set $a\vee b=\max\{a,b\}$ and $a\wedge b=\min\{a,b\}$.
We shall write $\bbN_0=\bbN\cup \{0\}$.

Let us discuss notation for eigenvalues. Let $A$ be a compact self-adjoint operator on a Hilbert space, with some positive and some negative eigenvalues. It will be convenient to have two systems of labelling the non-zero eigenvalues of $A$. The first one is the system that we have already used in the theorem above: we label the positive and negative eigenvalues separately, listing them as $\lambda^+_n(A)$ and $-\lambda^-_n(A)$. The second system is to label all non-zero eigenvalues of $A$ by $\{\lambda_n(A)\}_{n=1}^\infty$, listing them with multiplicities taken into account and in any convenient order. 
Thus, the list $\{\abs{\lambda_n(A)}\}_{n=1}^\infty$ is just the re-ordered union of the lists $\{\lambda^+_k(A)\}_{k=1}^\infty$ and $\{\lambda^-_\ell(A)\}_{\ell=1}^\infty$. Finally, sometimes we will start the enumeration not from $n=1$ but from $n=0$; this will be made clear in each case. 

It will be convenient to denote $t=-\tau>0$. Observing that $\sigma=(\tau-\rho)/2=-(\rho+t)/2$, we will rewrite $E$ as
\begin{equation}
E=\left\{\frac{[n,m]^t}{(nm)^{(\rho+t)/2}}\right\}_{n,m=1}^\infty.
\label{a1}
\end{equation}
In terms of the parameters $t$ and $\rho$, our assumptions \eqref{a0} rewrite as
\begin{equation}
t>0, \quad \rho>t+1.
\label{a2}
\end{equation}

\section{The strategy of proof}

For a natural number $n$, let us write its factorisation as a product of powers of primes
$$
n=\prod_{p\text{ prime}}p^{k_p},
$$
where $k_2,k_3,k_5,k_7,\dots$ are non-negative integers labeled by prime numbers, and $k_p=0$ except for finitely many primes $p$. Writing similarly $m=\prod_p p^{j_p}$, we see that the entries of $E$ (represented as in \eqref{a1}) can be written as 
\[
\frac{[n,m]^t}{(nm)^{(\rho+t)/2}}
=
\prod_{p\text{ prime}} \frac{p^{t(j_p\vee k_p)}}{p^{(\rho+t)(j_p+k_p)/2}}.
\]
In other words, the matrix $E$ can be viewed, at least formally, as an infinite tensor product 
\begin{equation}
E=\bigotimes_{p\text{ prime}}E_{p}, \quad
E_{p}=\{p^{-\frac12(\rho+t) j}p^{t(j\vee k)}p^{-\frac12(\rho+t) k}\}_{j,k\in\bbN_0}.
\label{b2}
\end{equation}
Our first step is the spectral analysis of the matrix $E_p$. Here we can regard $p>1$ as an arbitrary real number (not necessarily a prime). It is an elementary exercise to check that under the assumptions \eqref{a2} (in fact, it suffices to require $0<t<\rho$)  the matrix $E_p$ is Hilbert-Schmidt and therefore compact. We will prove
\begin{lemma}\label{lma.b1}
Assume \eqref{a2}; then the matrix $E_p$ has a trivial kernel, exactly one positive eigenvalue, which we denote by  $\lambda_0^+(E_p)$, and infinitely many negative eigenvalues, which we denote by $\{-\lambda_k^-(E_p)\}_{k=0}^\infty$. As $p\to\infty$, we have 
\begin{align}
\lambda^+_0(E_p)&=1+\calO(p^{-\rho+t}), 
\label{b3}
\\
\lambda^-_0(E_p)&=p^{-\rho+t}(1+\calO(p^{-\rho+t})+\calO(p^{-t})), 
\label{b4}
\\
\lambda^-_k(E_p)&\leq p^{-\rho k}\lambda^-_0(E_p), \quad k\geq1.
\label{b5}
\end{align}
\end{lemma}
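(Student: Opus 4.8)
The plan is to treat the matrix $E_p$ as a perturbation of a rank-one matrix when $p$ is large. First I would write $E_p = \{a_j a_k p^{t(j\vee k)}\}_{j,k\in\bbN_0}$ with $a_j = p^{-\frac12(\rho+t)j}$, and split $p^{t(j\vee k)} = p^{tj}p^{tk} \cdot p^{-t(j\wedge k)}$ (using $j\vee k = j + k - j\wedge k$). This turns the entries into $b_j b_k p^{-t(j\wedge k)}$ with $b_j = p^{-\frac12(\rho-t)j}$. Conjugating by the (bounded, boundedly invertible) diagonal operator $\diag(b_j)$ does not change the sign structure or the qualitative spectral picture, so the key object is the matrix $M_p = \{p^{-t(j\wedge k)}\}_{j,k\in\bbN_0}$; however since I also need precise asymptotics of the eigenvalues themselves, I would keep the weights and work with $E_p$ directly. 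The decomposition to exploit is $E_p = (b_j b_k)_{j,k}$-weighted version of $\jap{e,e}$ plus a correction: more concretely, $p^{-t(j\wedge k)} = 1 - (1-p^{-t(j\wedge k)})$, and $1-p^{-t(j\wedge k)}=0$ when $j\wedge k=0$, i.e. the correction matrix is supported on $j,k\geq 1$. Iterating this gives a natural ``staircase'' structure: $E_p$ is a rank-one positive matrix (the outer product of the vector $\{b_j\}$ with itself, conjugated back) minus a matrix that is itself of the same form on $\ell^2(\{1,2,\dots\})$ scaled by $p^{-t}$ and $p^{-(\rho-t)}$ factors, which is why the negative eigenvalues should decay geometrically with ratio roughly $p^{-\rho}$.

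The main steps I would carry out: (1) Show the positive part is essentially rank one: prove that $E_p = R_p - S_p$ where $R_p$ is rank one, positive, with $\jap{R_p e_0,e_0}=1$ and $R_p \to $ (the rank-one projection onto $e_0$) as $p\to\infty$ in operator norm with rate $\calO(p^{-(\rho-t)})$ coming from $\sum_{j\geq1} b_j^2 \asymp p^{-(\rho-t)}$, and where $S_p \geq 0$ is supported away from the $e_0$ direction; this simultaneously gives $\lambda_0^+(E_p) = 1 + \calO(p^{-\rho+t})$ via the variational principle (test vector $e_0$ for the lower bound, and the rank-one structure plus $S_p\geq 0$ for the upper bound) and shows $E_p$ has exactly one positive eigenvalue (since $-S_p\leq 0$ and $R_p$ has rank one, by the minimax principle $\lambda_2^+(E_p)\leq \lambda_2^+(R_p) = 0$). (2) For the negative eigenvalues, observe $-\lambda_k^-(E_p) = \lambda_{k+1}^\downarrow(E_p - R_p) + (\text{small})$... more cleanly: restrict to the subspace $e_0^\perp$ and analyze $E_p$ there, or use that $S_p$ restricted to $\ell^2(\{1,2,\dots\})$ is, after pulling out the scalar factor $p^{-t}$ from the $j\wedge k \geq 1$ shift and the factor $p^{-(\rho-t)}$ from reindexing $b_{j+1} = p^{-\frac12(\rho-t)}b_j$, unitarily a constant multiple ($\asymp p^{-\rho}$... this needs care) of a matrix of the same type $E_p$ shifted. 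This self-similar structure yields $\lambda_k^-(E_p) \asymp p^{-\rho k}\lambda_0^-(E_p)$, giving \eqref{b5}, while \eqref{b4} comes from computing the leading term of the top eigenvalue of $S_p$ (equivalently the top negative eigenvalue of $E_p$) from its $(1,1)$ entry $b_1^2(1-p^{-t}) = p^{-(\rho-t)}(1-p^{-t})$ plus controlling the rest.

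The hard part will be step (2): making the self-similarity precise enough to get \eqref{b5} with the clean bound $p^{-\rho k}$ (not merely $\asymp$ with an unspecified constant), and extracting the sharp asymptotic constant $1$ in front of $p^{-\rho+t}$ in \eqref{b4} rather than just the right order of magnitude. I expect one has to set up an honest recursion: let $E_p'$ denote the compression of $E_p$ to $\ell^2(\{1,2,3,\dots\})$; show $E_p' = p^{-(\rho-t)} U^* (D E_p D) U$ for an explicit shift $U$ and bounded diagonal $D$ close to the identity, so that the negative eigenvalues of $E_p'$ are $p^{-(\rho-t)}$ times those of a matrix spectrally close to $E_p$ — but the top negative eigenvalue of $E_p$ sits inside $E_p'$ (since the positive eigenvector is close to $e_0$), so one gets $\lambda_0^-(E_p) \approx p^{-(\rho-t)}\lambda_0^+(E_p) \approx p^{-(\rho-t)}$, consistent with \eqref{b4}, and iterating drops an extra $p^{-(\rho-t)}$ each time while the $j\wedge k$ contributes the remaining $p^{-t}$ per step, totalling $p^{-\rho}$ per step as in \eqref{b5}. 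Tracking the error terms $\calO(p^{-t})$ and $\calO(p^{-\rho+t})$ through this recursion, and verifying the inequality in \eqref{b5} holds for \emph{all} $k\geq 1$ uniformly, is the real work; the positive-eigenvalue and ``trivial kernel'' statements are comparatively routine (the kernel triviality follows since $E_p$ restricted to any finite section $\{0,1,\dots,N\}$ has nonzero determinant, e.g. by the known formula for such weighted LCM-type determinants, or directly by the $R_p - S_p$ decomposition).
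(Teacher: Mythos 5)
The decomposition $E_p=R_p-S_p$ with $R_p=\langle\cdot,b\rangle b$ (rank one, positive) and $[S_p]_{jk}=b_jb_k(1-p^{-t(j\wedge k)})\geq0$ is correct and is essentially the same as the paper's factorisation $E_p=-(p^t-1)T^*T+p^t\langle\cdot,\psi\rangle\psi$; together with $\langle E_pe_0,e_0\rangle=1$ this gives exactly one positive eigenvalue, and the two-sided bound $1\le\lambda_0^+(E_p)\le\norm{b}^2=(1-p^{-(\rho-t)})^{-1}$ yields \eqref{b3} cleanly. Up to here your plan is sound and arguably slicker than the paper's.

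However, your route to \eqref{b4} contains a genuine error of mechanism. The compression of $E_p$ to $\ell^2(\{1,2,\dots\})$, after the obvious shift, is \emph{exactly} $p^{-\rho}E_p$ (not $p^{-(\rho-t)}U^*(DE_pD)U$ with $D$ near the identity): one checks $[E_p]_{j+1,k+1}=p^{-(\rho+t)}p^t[E_p]_{jk}=p^{-\rho}[E_p]_{jk}$. Consequently the largest eigenvalue of the compression is $\approx p^{-\rho}$, which is an order of magnitude \emph{smaller} than the quantity $\lambda_0^-(E_p)\approx p^{-(\rho-t)}$ you are trying to capture. The statement that ``the top negative eigenvalue of $E_p$ sits inside $E_p'$'' is therefore false: the compression discards precisely the off-diagonal coupling $p^{-\frac12(\rho-t)}\widetilde\psi$ between the $e_0$-direction and the rest, and it is this coupling, entering \emph{quadratically} in a $2\times2$-type second-order effect, that produces $\lambda_0^-(E_p)\sim p^{-(\rho-t)}$. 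Interlacing with $S_p$ (rank-one comparison with $-E_p$) only gives $\lambda_1(S_p)\leq\lambda_0^-(E_p)\leq\lambda_0(S_p)$, which is far too loose to extract the constant $1$ in \eqref{b4}. The paper instead writes $E_p=A+B$ in the $\bbC\oplus\ell^2(\bbN)$ block decomposition, where $A$ (rank $\le3$) retains the off-diagonal coupling and has two nonzero eigenvalues solving $\lambda^2-\lambda=p^{-(\rho-t)}\norm{\widetilde\psi}^2$, while $B=\diag(0,p^{-\rho}\widetilde E_p)$ has norm $\calO(p^{-\rho})$; a spectral-stability lemma then gives \eqref{b3} and \eqref{b4} at once, with the $\calO(p^{-t})$ in \eqref{b4} coming from $\norm{B}/p^{-(\rho-t)}=\calO(p^{-t})$. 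You will want to replace your compression heuristic with something of this form. For \eqref{b5}, the clean self-similar identity $E_p'=p^{-\rho}E_p$ together with the fact that the off-diagonal block is rank two (one positive, one negative rank-one piece) gives, via the usual interlacing propositions, $\lambda_{k+1}^-(E_p)\leq p^{-\rho}\lambda_k^-(E_p)$ exactly, without the hand-waving about factors ``$p^{-(\rho-t)}$ per step plus $p^{-t}$ from $j\wedge k$''. Finally, the trivial-kernel claim is easier than your suggestion of a determinant formula: if $E_px=0$, write $y_k=p^{-\frac12(\rho+t)k}x_k$ and observe that subtracting consecutive rows of $\sum_jp^{t(j\vee k)}y_j=0$ forces $y_0=0$, then $y_1=0$, and so on.
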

We note that in the right hand side of \eqref{b4}, sometimes $\calO(p^{-\rho+t})$ dominates and sometimes $\calO(p^{-t})$, depending on the range of the parameters $\rho$, $t$. It will be convenient to equivalently rewrite \eqref{b4} as
\begin{equation}
\lambda^-_0(E_p)=p^{-\rho+t}(1+\calO(p^{-\delta})), \quad \delta=(\rho-t)\wedge t.
\label{b4a}
\end{equation}

In order to state our next result, we switch to the enumeration of eigenvalues of $E_p$ as $\{\lambda_k(E_p)\}_{k=0}^\infty$, listing them as follows:
\begin{align}
\lambda_0(E_p)&=\lambda_0^+(E_p)=1+\calO(p^{-\rho+t})>0,
\label{b6}
\\
\lambda_k(E_p)&=-\lambda_{k-1}^-(E_p)<0, \quad k\geq 1.
\notag
\end{align}
In \cite[Theorem 4.1]{HP1} we prove that the eigenvalues of $E$ are given by the products of the eigenvalues of the $E_p$. The argument of \cite{HP1} works both for $t<0$ (which was the main focus of that paper) and for $t>0$. 
\begin{theorem}  \cite[Theorem 4.1]{HP1}\label{thm.b2} 
Assume \eqref{a0}; then the matrix $E$ is bounded on $\ell^2(\bbN)$ and has a pure point spectrum. 
There exists an enumeration $\{\lambda_n(E)\}_{n=1}^\infty$ of the eigenvalues of $E$ (here we do not insist on monotonicity!) such that
\begin{equation}
\lambda_n(E)=\prod_{p \text{ \rm  prime}} \lambda_{k_p}(E_p), \quad\text{ if }\quad n=\prod_{p \text{ \rm prime}}p^{k_p}.
\label{b7}
\end{equation}
\end{theorem}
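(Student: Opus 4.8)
The plan is to make rigorous the formal tensor-product decomposition \eqref{b2}, $E = \bigotimes_p E_p$, and then invoke the spectral theorem for tensor products of compact self-adjoint operators. First I would fix, for each prime $p$, an orthonormal eigenbasis $\{\varphi_k^{(p)}\}_{k\in\bbN_0}$ of $\ell^2(\bbN_0)$ consisting of eigenvectors of $E_p$ with eigenvalues $\lambda_k(E_p)$ as enumerated in \eqref{b6}; this exists since $E_p$ is compact, self-adjoint and (by Lemma~\ref{lma.b1}) has trivial kernel, so its eigenvectors span $\ell^2(\bbN_0)$. For a function $n\mapsto k_p(n)$ recording the exponents in the factorisation $n=\prod_p p^{k_p}$, with $k_p=0$ for all but finitely many $p$, I would define the vector $\Phi_n\in\ell^2(\bbN)$ to be ``$\bigotimes_p\varphi_{k_p(n)}^{(p)}$'': concretely, the identification of $\ell^2(\bbN)$ with $\bigotimes_p\ell^2(\bbN_0)$ (restricted-tensor-product with respect to the stabilising sequence $(e_0,e_0,\dots)$, where $e_0$ is the first standard basis vector) sends the standard basis vector $\delta_n$ to $\bigotimes_p \delta_{k_p(n)}$, and $\Phi_n$ is the image of $\bigotimes_p\varphi_{k_p(n)}^{(p)}$ under this identification. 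One checks $\{\Phi_n\}_{n\in\bbN}$ is an orthonormal basis of $\ell^2(\bbN)$ because each factor basis is orthonormal and only finitely many factors differ from $\varphi_0^{(p)}$.

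Next I would verify that $E\Phi_n = \bigl(\prod_p \lambda_{k_p(n)}(E_p)\bigr)\Phi_n$, i.e. that each $\Phi_n$ is an eigenvector with the product eigenvalue \eqref{b7}. The key point is that the infinite product converges absolutely: by \eqref{b3} we have $\lambda_0(E_p)=1+\calO(p^{-\rho+t})$ with $\rho-t>1$, so $\sum_p|\lambda_0(E_p)-1|<\infty$; and for the finitely many primes with $k_p(n)>0$ the factors $\lambda_{k_p(n)}(E_p)$ are finite numbers. To transfer the eigenvalue equation from the finite-tensor level to the infinite one, I would introduce the truncated operators $E^{(P)}=\bigotimes_{p\le P}E_p\ \otimes\ \bigotimes_{p>P}I$ acting on the same space, observe that $E^{(P)}\Phi_n$ equals the partial-product eigenvalue times $\Phi_n$ (a genuine finite tensor computation, valid since for $p\le P$ the vector $\varphi_{k_p(n)}^{(p)}$ is an $E_p$-eigenvector, and for $p>P$ we have $I\varphi_{k_p(n)}^{(p)}=\varphi_{k_p(n)}^{(p)}$), and then show $E^{(P)}\to E$ in an appropriate sense as $P\to\infty$. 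For this convergence it is cleanest to first establish that $E^{(P)}\to E$ strongly (it suffices to check convergence on the total set $\{\delta_m\}$, where the entrywise formula $\frac{[n,m]^t}{(nm)^{(\rho+t)/2}}=\prod_p\frac{p^{t(j_p\vee k_p)}}{p^{(\rho+t)(j_p+k_p)/2}}$ shows the $m$th column of $E^{(P)}$ converges entrywise, and a dominated-convergence argument using $\rho>t+1$ bounds the $\ell^2$ tails uniformly); strong convergence then gives $E\Phi_n=\lim_P E^{(P)}\Phi_n=\bigl(\prod_p\lambda_{k_p(n)}(E_p)\bigr)\Phi_n$. Boundedness of $E$ follows from $\norm{E}\le\prod_p\norm{E_p}=\prod_p\lambda_0^+(E_p)<\infty$ (again using \eqref{b3}), and since $\{\Phi_n\}$ is an orthonormal eigenbasis, $E$ has pure point spectrum with the eigenvalues listed in \eqref{b7}.

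The main obstacle is the careful bookkeeping around the infinite (restricted) tensor product: one must make precise the identification $\ell^2(\bbN)\cong\bigotimes_p\ell^2(\bbN_0)$ with the correct stabilising vector, check that this is a genuine unitary equivalence, and ensure the infinite product $\prod_p\lambda_{k_p(n)}(E_p)$ is genuinely convergent (not merely formally) — this is exactly where the hypothesis $\rho>t+1$, equivalently $\sum_p p^{-(\rho-t)}<\infty$, is essential, both for convergence of the eigenvalue products and for the domination needed in the strong-convergence step. The spectral consequences (no monotonicity claimed, multiplicities handled automatically since $\{\Phi_n\}$ is an orthonormal basis) then follow formally. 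Since this is \cite[Theorem 4.1]{HP1} and the argument there is stated to work verbatim for $t>0$, I would keep the exposition here brief and refer the reader to \cite{HP1} for the full details.
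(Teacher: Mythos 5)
The paper itself does not prove this theorem: it is cited verbatim from \cite[Theorem 4.1]{HP1}, with the remark that the argument of \cite{HP1} applies equally for $t<0$ and $t>0$. Moreover, the authors explicitly say that in \cite{HP1} they \emph{chose to avoid the abstract language of infinite tensor products} and gave a concrete, hands-on proof adapted to the particular matrix $E$. Your proposal goes in exactly the opposite direction: you make the restricted (von Neumann) infinite tensor product $\ell^2(\bbN)\cong\bigotimes_p\ell^2(\bbN_0)$ the central object and deduce everything from the tensor factorisation $E=\bigotimes_p E_p$. Both routes reach the same conclusion, but they trade off differently: the concrete route of \cite{HP1} never needs the machinery of infinite tensor products and so avoids the delicate equivalence questions below; the abstract route is shorter once the machinery is set up, and makes the structural reason for the product formula \eqref{b7} transparent.

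There is one genuine gap in your sketch, located precisely at the spot you flag as ``the main obstacle'' but do not resolve. You identify $\ell^2(\bbN)$ with the restricted tensor product stabilised along the constant sequence $(e_0,e_0,\dots)$, and you then propose $\Phi_n=\bigotimes_p\varphi_{k_p(n)}^{(p)}$ as an orthonormal basis, where $\varphi_k^{(p)}$ is the $k$th eigenvector of $E_p$. For every $n$ all but finitely many factors of $\Phi_n$ equal $\varphi_0^{(p)}$, \emph{not} $e_0$. Thus the $\Phi_n$ naturally live in the restricted tensor product stabilised along $(\varphi_0^{(p)})_p$, and your claim ``$\{\Phi_n\}$ is an orthonormal basis of $\ell^2(\bbN)$ because only finitely many factors differ from $\varphi_0^{(p)}$'' only proves orthonormality; it does not show that $\Phi_n$ lies in (let alone spans) the $e_0$-stabilised copy. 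For that you must verify von Neumann's equivalence criterion for the two stabilising sequences, namely
\[
\sum_{p}\bigl|\,1-\langle\varphi_0^{(p)},e_0\rangle\,\bigr|<\infty ,
\]
which in turn requires a quantitative estimate on how fast the leading eigenvector $\varphi_0^{(p)}$ of $E_p$ approaches $e_0$ as $p\to\infty$. This estimate is true (it follows from the block decomposition \eqref{d3} and a perturbation bound of the same flavour as the one used to prove \eqref{b3}, giving $\|\varphi_0^{(p)}-e_0\|=\calO(p^{-(\rho-t)/2})$, which is summable since $\rho-t>1$), but it must be proved; without it, the claim that $\{\Phi_n\}$ is complete in $\ell^2(\bbN)$, and hence the pure-point-spectrum conclusion, does not follow. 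The remainder of your argument — the truncation operators $E^{(P)}$, strong convergence via dominated convergence on columns, and passing to the limit in $E^{(P)}\Phi_n$ — is sound once this lemma is in place.
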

The product formula \eqref{b7} formally follows from the tensor product factorisation \eqref{b2}, although in \cite{HP1} we have chosen to avoid the highly abstract language of infinite tensor products and give a concrete proof adapted to this particular case. 

In \eqref{b7} we have $k_p=0$ for all but finitely many primes, and therefore the convergence of the infinite product reduces to the convergence of the infinite product
$$
\prod_{p \text{ prime}} \lambda_{0}(E_p);
$$
the latter product converges by \eqref{b6} because $\rho-t>1$ (see \eqref{a2}).

From the product formula \eqref{b7} and from Lemma~\ref{lma.b1} it follows that $E$ has infinitely many positive and infinitely many negative eigenvalues. 
Substituting the asymptotics of Lemma~\ref{lma.b1} into the product formula \eqref{b7}, we will prove our main result. Our main tool is the  Tauberian theorem due to Wiener-Ikehara. 

In Section~\ref{sec.c} we prove Lemma~\ref{lma.b1}. In Section~\ref{sec.d} we put it together with the product formula and prove Theorem~\ref{thm.a0}. In Section~\ref{sec.e} we discuss the connection with Beurling primes. 

\section{Proof of Lemma~\ref{lma.b1}}\label{sec.c}

\subsection{Spectral perturbation theory}
In this section, we use some simple statements from spectral perturbation theory of self-adjoint operators that ultimately follow from the min-max principle. For clarity, we recall the necessary statements here in general form; for the details, see e.g. \cite[Chapter~9]{BirmanSolomyak}.

Below $A$ and $B$ are compact self-adjoint operators in a Hilbert space. We write $A\geq B$ if $\jap{Ax,x}\geq \jap{Bx,x}$ for all elements $x$ in our Hilbert space (here $\jap{x,y}$ is the inner product of $x$ and $y$). For $\lambda>0$, we denote by $N((\lambda,\infty);A)$ the total number of eigenvalues (counting multiplicities) of $A$ in the interval $(\lambda,\infty)$ and similarly let $N((-\infty,-\lambda);A)=N((\lambda,\infty);-A)$. 

\begin{proposition}\cite[Theorem 9.2.7]{BirmanSolomyak}
\label{prp.st1}
Let $A$ and $B$ be compact self-adjoint operators with $A\geq B$. Then for any $\lambda>0$, 
\begin{align*}
N((\lambda,\infty);A)&\geq N((\lambda,\infty);B),
\\
N((-\infty,-\lambda);A)&\leq N((-\infty,-\lambda);B).
\end{align*}
\end{proposition}
Informally speaking, this means that a positive perturbations shifts all eigenvalues to the right, i.e. in the positive direction.

\begin{proposition}\cite[Theorem~9.3.3]{BirmanSolomyak}
\label{prp.st2}
Let $A$ and $B$ be compact self-adjoint operators such that $A- B$ is a rank one operator. 
Then for any $\lambda>0$, 
\begin{align*}
\abs{N((\lambda,\infty);A)-N((\lambda,\infty);B)}\leq 1,
\\
\abs{N((-\infty,-\lambda);A)-N((-\infty,-\lambda);B)}\leq 1.
\end{align*}
\end{proposition}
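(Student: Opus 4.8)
The plan is to deduce both estimates from the standard variational (Glazman) description of the counting function, which underlies Chapter~9 of \cite{BirmanSolomyak}: for a compact self-adjoint operator $A$ and $\lambda>0$, the number $N((\lambda,\infty);A)$ equals the supremum of $\dim L$ over all subspaces $L$ of the Hilbert space on which the form inequality $\jap{Ax,x}>\lambda\norm{x}^2$ holds for every non-zero $x\in L$, and this supremum is attained, namely on the span of the eigenvectors of $A$ with eigenvalue exceeding $\lambda$ (of which there are only finitely many since $A$ is compact and $\lambda>0$). I would begin by recording this fact. Note that the monotonicity statement of Proposition~\ref{prp.st1} alone is not enough here; the rank-one structure has to be used.

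Next I would put the perturbation into concrete form. As $A$ and $B$ are self-adjoint, $R:=A-B$ is a self-adjoint operator of rank one, so, excluding the trivial case $R=0$, we may write $Rx=c\jap{x,u}u$ for some real $c\ne0$ and some unit vector $u$. The key observation is that $R$ vanishes on the hyperplane $u^\perp$ in the quadratic-form sense: $\jap{Rx,x}=c\abs{\jap{x,u}}^2=0$ for every $x\perp u$, and hence $\jap{Ax,x}=\jap{Bx,x}$ for all such $x$.

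The heart of the argument is the bound $N((\lambda,\infty);A)\le N((\lambda,\infty);B)+1$. I would take a subspace $L$ realising $N((\lambda,\infty);A)$, that is, with $\dim L=N((\lambda,\infty);A)$ and $\jap{Ax,x}>\lambda\norm{x}^2$ on $L\setminus\{0\}$. Its intersection $L_0:=L\cap u^\perp$ with the hyperplane satisfies $\dim L_0\ge\dim L-1$ (the restriction to $L$ of the functional $x\mapsto\jap{x,u}$ has kernel of codimension at most one), and on $L_0$ we have $\jap{Bx,x}=\jap{Ax,x}>\lambda\norm{x}^2$; the variational description then gives $N((\lambda,\infty);B)\ge\dim L_0\ge N((\lambda,\infty);A)-1$. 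Interchanging $A$ and $B$ (note that $B-A=-R$ is again rank one) yields the reverse bound, and together they give $\abs{N((\lambda,\infty);A)-N((\lambda,\infty);B)}\le1$. The second inequality of the proposition follows by applying the first to $-A$ and $-B$, since $N((-\infty,-\lambda);A)=N((\lambda,\infty);-A)$ and $(-A)-(-B)=-R$ is rank one.

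I do not expect a genuine obstacle: the only points needing a little care are that the drop in dimension when intersecting with $u^\perp$ is at most one, and that the variational formula for $N$ is invoked as an exact identity, not merely a one-sided bound. If one prefers to avoid quoting that formula, an equivalent route is to work with the positive eigenvalues $\lambda^+_n$ via the Courant--Fischer min-max principle: enlarging a near-optimal $(n-1)$-dimensional test subspace for $\lambda^+_n(B)$ by $\mathrm{span}\{u\}$ produces a test subspace of dimension at most $n$ on which $A$ and $B$ carry the same quadratic form, giving the interlacing inequalities $\lambda^+_{n+1}(A)\le\lambda^+_n(B)$ and $\lambda^+_{n+1}(B)\le\lambda^+_n(A)$, which translate immediately into the desired estimates on the counting functions.
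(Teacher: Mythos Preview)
The paper does not prove this proposition; it merely quotes it from \cite[Theorem~9.3.3]{BirmanSolomyak} and uses it as a black box. Your argument via the Glazman variational description of $N((\lambda,\infty);\cdot)$, together with the observation that a rank-one perturbation vanishes on a hyperplane so that intersecting a maximising subspace with $u^\perp$ drops the dimension by at most one, is precisely the standard proof that underlies the cited theorem, and it is correct as written.
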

If $A-B$ is a rank one operator, then necessarily $A\geq B$ or $B\geq A$. Then the two previous propositions, taken together, show that the eigenvalues of $A$ and $B$ interlace. 
This is also known as the interlacing theorem. 

The final result we need is

\begin{proposition}\cite[Lemma 9.4.3]{BirmanSolomyak}
\label{prp.st}
Let $A$ and $B$ be bounded self-adjoint operators in a Hilbert space. 
Assume that $A$ has exactly one simple eigenvalue in an interval $(\alpha,\beta)$ and no other eigenvalues in the interval $(\alpha-2\norm{B},\beta+2\norm{B})$. Then $A+B$ has exactly one simple eigenvalue (and no other eigenvalues) in the interval $(\alpha-\norm{B},\beta+\norm{B})$. 
\end{proposition}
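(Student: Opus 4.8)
The plan is to deform $A$ linearly into $A+B$ and follow the spectral projection attached to a small interval around the given eigenvalue, using the standard fact that along a norm‑continuous path of bounded self‑adjoint operators the rank of the spectral projection of a fixed interval is locally constant as long as the endpoints of the interval stay off the spectrum. So write $b=\norm{B}$ and let $\lambda_0\in(\alpha,\beta)$ be the simple eigenvalue in question; I read the hypothesis as saying $\sigma(A)\cap(\alpha-2b,\beta+2b)=\{\lambda_0\}$. The crucial consequence is the \emph{strict} gap estimate $d:=\operatorname{dist}(\lambda_0,\sigma(A)\setminus\{\lambda_0\})>2b$, which holds because $\sigma(A)\setminus\{\lambda_0\}\subseteq(-\infty,\alpha-2b]\cup[\beta+2b,\infty)$ while $\lambda_0$ lies strictly inside $(\alpha,\beta)$. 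Fix $\varepsilon$ with $0<\varepsilon<\min\{\tfrac12(d-2b),\,\lambda_0-\alpha,\,\beta-\lambda_0\}$ and set $\gamma=\lambda_0-b-\varepsilon$, $\delta=\lambda_0+b+\varepsilon$; then one checks directly that $(\gamma,\delta)\subseteq(\alpha-b,\beta+b)$, that $\sigma(A)\cap[\gamma,\delta]=\{\lambda_0\}$, and that $\operatorname{dist}(\gamma,\sigma(A))\ge b+\varepsilon$ and $\operatorname{dist}(\delta,\sigma(A))\ge b+\varepsilon$.

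Next I would use the elementary spectral inclusion $\sigma(A+C)\subseteq\{x\in\bbR:\operatorname{dist}(x,\sigma(A))\le\norm{C}\}$, valid for bounded self‑adjoint $A$ and $C$ (write $A+C-x=(A-x)(I+(A-x)^{-1}C)$ and expand in a Neumann series when $\operatorname{dist}(x,\sigma(A))>\norm{C}$). With $C=B$ it gives $\gamma,\delta\notin\sigma(A+B)$; moreover, any $x\in\sigma(A+B)\cap(\alpha-b,\beta+b)$ lies within $b$ of some $\nu\in\sigma(A)$, and then $\nu\in(\alpha-2b,\beta+2b)$, which forces $\nu=\lambda_0$, so $x\in[\lambda_0-b,\lambda_0+b]\subseteq(\gamma,\delta)$. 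Hence $\sigma(A+B)\cap(\alpha-b,\beta+b)=\sigma(A+B)\cap(\gamma,\delta)$, and it remains only to show that the spectral projection $\mathbf 1_{(\gamma,\delta)}(A+B)$ has rank one.

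For this I set $A_s=A+sB$ for $s\in[0,1]$. Since $\norm{sB}\le b$, the spectral inclusion gives $\operatorname{dist}(\gamma,\sigma(A_s))\ge\operatorname{dist}(\gamma,\sigma(A))-sb\ge\varepsilon$ and likewise for $\delta$, so $\gamma$ and $\delta$ stay off $\sigma(A_s)$ uniformly in $s$. A standard argument then shows that $P_s:=\mathbf 1_{(\gamma,\delta)}(A_s)$ is a norm‑continuous family of orthogonal projections on $[0,1]$: represent $P_s=\frac1{2\pi i}\oint_\Gamma(z-A_s)^{-1}\,dz$ over a fixed small rectangle $\Gamma$ around the segment $[\gamma,\delta]$ with vertical sides through $\gamma$ and $\delta$, on which the resolvents $(z-A_s)^{-1}$ are bounded and Lipschitz in $s$ uniformly, thanks to the uniform distance $\varepsilon$. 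Orthogonal projections at norm‑distance less than $1$ have equal rank, so $\operatorname{rank}P_s$ is constant on $[0,1]$; and $\operatorname{rank}P_0=\operatorname{rank}\mathbf 1_{(\gamma,\delta)}(A)=1$ because $\sigma(A)\cap(\gamma,\delta)=\{\lambda_0\}$ is a simple eigenvalue. Therefore $\operatorname{rank}\mathbf 1_{(\gamma,\delta)}(A+B)=1$, which together with the previous paragraph says exactly that $A+B$ has one simple eigenvalue in $(\alpha-b,\beta+b)$ and no other spectrum there.

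The point that needs care is conceptual rather than computational: the hypothesis must be understood as isolating $\lambda_0$ from the \emph{entire} spectrum of $A$ in the enlarged interval, and one should resist arguing by subtracting the counting functions $N((\gamma,\infty);\cdot)$ and $N((\delta,\infty);\cdot)$, since these may be infinite and there is no min–max shortcut here — a two‑dimensional subspace on which the Rayleigh quotient of $A$ is squeezed into a small interval around $\lambda_0$ need not lie in the spectral subspace of $A$, because of cancellation between the far positive and far negative parts of $\sigma(A)$. Deforming the spectral projection itself avoids this entirely; the remaining work — the bookkeeping with the auxiliary endpoints $\gamma,\delta$ and the uniform resolvent bounds on $\Gamma$ — is routine.
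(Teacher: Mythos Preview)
Your argument is correct. The paper does not prove this proposition but simply cites it from \cite{BirmanSolomyak}, so there is no in-house proof to compare against. The route you take---a linear homotopy $A_s=A+sB$ together with norm-continuity of the Riesz projection $P_s=\frac{1}{2\pi i}\oint_\Gamma(z-A_s)^{-1}\,dz$---is the standard self-contained way to establish such spectral-stability statements for bounded self-adjoint operators. The bookkeeping with the auxiliary endpoints $\gamma,\delta$ is carried out correctly; in particular, the strict inequality $d>2b$ does follow because $\lambda_0$ lies \emph{strictly} inside $(\alpha,\beta)$, and this is precisely what makes room for the $\varepsilon$ and hence for the uniform resolvent bound along $\Gamma$.

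Your reading of the hypothesis as $\sigma(A)\cap(\alpha-2b,\beta+2b)=\{\lambda_0\}$, rather than merely ``no \emph{eigenvalues} other than $\lambda_0$'', is the right one, and you are right to flag it: without excluding continuous spectrum the statement would fail for general bounded self-adjoint $A$. In the paper's applications the operator $A$ has pure point spectrum (indeed only the three spectral points $\lambda_-,0,\lambda_+$), so the distinction is immaterial there. Your closing remark about why a min--max or counting-function shortcut is unavailable---those counts may be infinite for merely bounded $A$---is also well taken.
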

Informally speaking, this proposition tells us that the perturbation $B$ can ``displace'' the eigenvalues of $A$ by no more than $\norm{B}$. The condition that $A$ has no other eigenvalues in $(\alpha-2\norm{B},\beta+2\norm{B})$ ensures that $B$ cannot ``bring'' any other eigenvalues from the outside into our interval.

\subsection{Exactly one eigenvalue of $E_p$ is positive}

Let $T=\{T_{jk}\}_{j,k=0}^\infty$ be the upper triangular matrix with entries 
$$
T_{jk}=
\begin{cases}p^{-\frac{t}{2} j}p^{-\frac12(\rho-t)k}, & k\geq j,\\ 0, & k<j.\end{cases}
$$
It is easy to see that for $\rho>0$ the matrix $T$ is Hilbert-Schmidt. 
Let us compute $T^*T$:
\begin{align*}
[T^*T]_{jk}&=\sum_{\ell=0}^\infty T_{\ell j}T_{\ell k}
=
\sum_{\ell=0}^{j\wedge k}p^{-\frac{t}{2}\ell}p^{-\frac12(\rho-t)j}p^{-\frac{t}{2}\ell}p^{-\frac12(\rho-t)k}
\\
&=
p^{-\frac12(\rho-t)(j+k)}\sum_{\ell=0}^{j\wedge k}p^{-t\ell}
=p^{-\frac12(\rho-t)(j+k)}\frac{p^{-t(j\wedge k)-t}-1}{p^{-t}-1}
\\
&=p^{-\frac12(\rho-t)(j+k)}\frac{p^t-p^{-t(j\wedge k)}}{p^t-1}.
\end{align*}
Using the fact that $j+k=j\wedge k+j\vee k$, we find that 
\begin{align*}
(p^t-1)[T^*T]_{jk}&= p^tp^{-\frac12(\rho-t)(j+k)} - p^{-\frac12(\rho-t)(j+k) - t(j+k-j\vee k)}
\\
&=p^t\psi_j\psi_k - p^{-\frac12(\rho+t)j}p^{t(j\vee k)}p^{-\frac12(\rho+t)k},
\end{align*}
where 
\begin{equation}
\psi_k=p^{-\frac12(\rho-t)k}, \quad k\geq0.
\label{d1}
\end{equation}
Denoting by $\psi\in \ell^2(\bbN_0)$ the element with these coordinates and comparing with \eqref{b2}, 
we find
$$
E_p=-(p^t-1)T^*T+p^t\jap{\cdot,\psi}\psi,
$$
where $\jap{\cdot,\psi}\psi$ denotes the rank one operator mapping $x$ to $\jap{x,\psi}\psi$.
Since the operator $-(p^t-1)T^*T$ has no positive eigenvalues, by Proposition~\ref{prp.st2} (applied to any interval $(\lambda,\infty)$)  we find that at most one eigenvalue of $E_p$ is positive. 

Finally, we observe that $\jap{E_p e_0,e_0}=1>0$, where $e_0=(1,0,0,\dots)$ is the first element of the standard basis in $\ell^2(\bbN_0)$. It follows that $E_p$ has at least one positive eigenvalue. We conclude that $E_p$ has exactly one positive eigenvalue.

\subsection{The matrix structure of $E_p$}

Let us consider the matrix structure of $E_p$. 
The entries of the first row of $E_p$ are
$$
[E_p]_{0k}=p^{-\frac12(\rho-t)k}=\psi_k, \quad k\geq0.
$$
By symmetry, these are also the entries of the first column.
Further, the principal sub-matrix of $E_p$, obtained by deleting the first row and the first column of $E_p$, is
$$
\{p^{-\frac12(\rho+t)(j+k)}p^{t(j\vee k)}\}_{j,k=1}^\infty
=
\{p^{-\rho} p^{-\frac12(\rho+t)(j+k)}p^{t(j\vee k)}\}_{j,k=0}^\infty,
$$
i.e. this is the $p^{-\rho}$-scaled version of $E_p$. 

Let us express these facts in terms of block-matrix notation. 
Separating out the first co-ordinate of elements of $\ell^2(\bbN_0)$, we can write $\ell^2(\bbN_0)=\bbC\oplus \ell^2(\bbN)$. Writing $E_p$ as a $2\times 2$ block matrix with respect to this decomposition, we obtain
\begin{equation}
E_p=\begin{pmatrix}
1 & p^{-\frac12(\rho-t)}\jap{\cdot, \widetilde\psi}
\\
p^{-\frac12(\rho-t)}\widetilde\psi & p^{-\rho}\widetilde E_p
\end{pmatrix} \quad \text{ in } \bbC\oplus \ell^2(\bbN),
\label{d3}
\end{equation}
where $\widetilde\psi$ is the same vector as in \eqref{d1}, but considered as an element of $\ell^2(\bbN)$, i.e.  $\widetilde\psi=\{p^{-\frac12(k-1)(\rho-t)}\}_{k=1}^\infty$, and $\widetilde E_p$ is identical to $E_p$ but considered as acting on $\ell^2(\bbN)$ instead of $\ell^2(\bbN_0)$. 
The ``self-similar'' structure \eqref{d3} plays a key role in what follows.

\subsection{$2\times 2$ perturbation theory: the proof of \eqref{b3} and \eqref{b4}}

Starting from the decomposition \eqref{d3}, let us write $E_p$ as a sum
\begin{equation}
E_p=A+B, \quad 
A=\begin{pmatrix}
1 & p^{-\frac12(\rho-t)}\jap{\cdot, \widetilde\psi}
\\
p^{-\frac12(\rho-t)}\widetilde\psi & 0
\end{pmatrix},
\quad
B=\begin{pmatrix}
0 & 0
\\
0 & p^{-\rho}\widetilde E_p
\end{pmatrix}.
\label{d4}
\end{equation}
Our aim is to apply Proposition~\ref{prp.st} to this decomposition. 
It is a simple exercise in linear algebra to compute the spectrum of $A$. We easily find that $A$ has the eigenvalues $\{\lambda_-,0,\lambda_+\}$, where $0$ has infinite multiplicity in the spectrum, each of $\lambda_\pm$ has multiplicity one, and $\lambda_\pm$ are the solutions to the quadratic equation
$$
\lambda^2-\lambda-p^{-\rho+t}\norm{\widetilde\psi}^2=0.
$$
Solving the quadratic equation and using the fact that
$$
\norm{\widetilde\psi}^2=\norm{\psi}^2=1+\calO(p^{-\rho+t}),\quad p\to\infty,
$$
we find 
\begin{align*}
\lambda_+&=1+\calO(p^{-\rho+t}), 
\\
\lambda_-&=-p^{-\rho+t}(1+\calO(p^{-\rho+t})), 
\end{align*}
as $p\to\infty$. 

In order to use Proposition~\ref{prp.st}, we need to estimate the norm of $B$. We have
$$
\norm{B}=p^{-\rho}\norm{\widetilde E_p}=p^{-\rho}\norm{E_p}.
$$
We can use, for example, the Hilbert-Schmidt estimate for the norm of $E_p$:
$$
\norm{E_p}^2\leq \sum_{j,k=0}^\infty \abs{[E_p]_{jk}}^2
$$
which, after a short computation, yields $\norm{E_p}=\calO(1)$ as $p\to\infty$ and therefore 
$$
\norm{B}=\calO(p^{-\rho}), \quad p\to\infty.
$$
Applying Proposition~\ref{prp.st} to $A$, $B$ as in \eqref{d4}, we find that there is exactly one eigenvalue of $E_p$ in the $\calO(p^{-\rho})$-neighbourhood of $\lambda_+$ and also exactly one eigenvalue in the $\calO(p^{-\rho})$-neighbourhood of $\lambda_+$.

For clarity, let us record the same statement more formally. 
We apply Proposition~\ref{prp.st} to the interval 
$$
(\alpha,\beta)=(\lambda_+-p^{-\rho}\norm{E_p},\lambda_++p^{-\rho}\norm{E_p}).
$$ The proposition shows that there is exactly one eigenvalue of $E_p$ in the interval $(\lambda_+-2p^{-\rho}\norm{E_p},\lambda_++2p^{-\rho}\norm{E_p})$. As we already know that $E_p$ has exactly one positive eigenvalue, we conclude that 
$$
\lambda^+_0(E_p)=\lambda_++\calO(p^{-\rho}), 
$$
which yields \eqref{b3}. 

Similarly, let us apply Proposition~\ref{prp.st} to the interval 
$$
(\alpha,\beta)=(\lambda_--p^{-\rho}\norm{E_p},\lambda_-+p^{-\rho}\norm{E_p}).
$$ 
The proposition shows that there is exactly one eigenvalue of $E_p$ in the interval $(\lambda_--2p^{-\rho}\norm{E_p},\lambda_-+2p^{-\rho}\norm{E_p})$. Applying the proposition to $(-R,-2p^{-\rho}\norm{E_p})$, where $R>0$ is arbitrarily large, we find that there are no other eigenvalues of $E_p$ in $(-\infty,-p^{-\rho}\norm{E_p})$. We conclude that the eigenvalue that we have just located is indeed the smallest eigenvalue of $E_p$ and that it satisfies
$$
-\lambda^-_0(E_p)=\lambda_-+\calO(p^{-\rho}),
$$
which yields formula \eqref{b4}.

\subsection{Interlacing theorem: the proof of \eqref{b5}}

Essentially, \eqref{b5} is the consequence of Cauchy's interlacing theorem. Since the standard version of this theorem applies to finite matrices and we are working with operators in infinite dimensional space, we spell out the details of this argument. Write the decomposition \eqref{d3} as
$$
E_p=E_{\diag}+X, 
$$
where
$$
E_{\diag}=\begin{pmatrix} 1 & 0 \\ 0 & p^{-\rho} \widetilde E_p\end{pmatrix},
\quad
X=\begin{pmatrix}
0 & p^{-\frac12(\rho-t)}\jap{\cdot,\widetilde\psi} 
\\
p^{-\frac12(\rho-t)}\widetilde\psi & 0
\end{pmatrix}. 
$$
The matrix $X$ has rank two, with one positive and one negative eigenvalue; let us record this as
$$
X=\jap{\cdot,\varphi_+}\varphi_+ -\jap{\cdot,\varphi_-}\varphi_-.
$$
It follows that 
$$
E_p\geq E_{\diag}-\jap{\cdot,\varphi_-}\varphi_-.
$$
For any $\lambda>0$ by Proposition~\ref{prp.st1} we find
$$
N((-\infty,-\lambda);E_p)\leq N\bigl((-\infty,-\lambda); E_{\diag}-\jap{\cdot,\varphi_-}\varphi_-\bigr)
$$
and by by Proposition~\ref{prp.st2}
\begin{align*}
N\bigl((-\infty,-\lambda); E_{\diag}-\jap{\cdot,\varphi_-}\varphi_-\bigr)&\leq N((-\infty,-\lambda); E_{\diag})+1
\\
&= N((-\infty,-\lambda); p^{-\rho}\widetilde E_p)+1
\\
&= N((-\infty,-\lambda); p^{-\rho} E_p)+1.
\end{align*}
Putting this together, we find
$$
N((-\infty,-\lambda);E_p)\leq N((-\infty,-\lambda); p^{-\rho} E_p)+1;
$$
rewriting this in terms of the negative eigenvalues of $E_p$ gives
$$
\lambda_{k+1}^-(E_p)\leq 
\lambda_k^-(p^{-\rho}E_p)=
p^{-\rho}\lambda_k^-(E_p), \quad k\geq0.
$$
Iterating this, we arrive at the required estimate \eqref{b5}.

\subsection{$E_p$ has a trivial kernel}
Suppose $E_px=0$ for some $x\in \ell^2(\bbN_0)$. Let us use the definition \eqref{b2} of $E_p$. Relabelling $y_k=p^{-\frac12(\rho+t)k} x_k$, we find
\begin{equation}
\sum_{j=0}^\infty p^{t(j\vee k)}y_j=0, \quad k=0,1,2,\dots
\label{d6}
\end{equation}
Subtracting \eqref{d6} with $k=1$ from \eqref{d6} with $k=0$, we find $y_0=0$. 
Next, subtracting \eqref{d6} with $k=2$ from \eqref{d6} with $k=1$, we find $y_1=0$. 
Proceeding in the same way, by induction we find that $y=0$. 
The proof of Lemma~\ref{lma.b1} is complete. 

\section{Proof of Theorem~\ref{thm.a0}}\label{sec.d}

\subsection{The set-up}

We introduce the eigenvalue counting functions for $E$ as follows:
\begin{align*}
\mu^+(x)=\#\{n\in\bbN: \lambda^+_n(E)>1/x\}, 
\\
\mu^-(x)=\#\{n\in\bbN: \lambda^-_n(E)>1/x\}, 
\end{align*}
and $\mu(x)=\mu_+(x)+\mu_-(x)$, where $x>0$.
We will prove 
\begin{equation}
\mu^\pm(x)=\varkappa^{1/(\rho-t)}x^{1/(\rho-t)}+o(x^{1/(\rho-t)}), \quad x\to\infty, 
\label{e1}
\end{equation}
for both signs $+$ and $-$; 
this is equivalent to the desired asymptotics of Theorem~\ref{thm.a0}.

Consider the functions
\begin{equation}
f^\pm(s)=\int_0^\infty x^{-s}d\mu^\pm(x)=\sum_{n=1}^\infty \lambda^\pm_n(E)^s.
\label{e4}
\end{equation}
We will show that these integrals converge absolutely for $\Re s>\frac1{\rho-t}$. Furthermore, we will use the following Tauberian theorem of Wiener and Ikehara, see e.g. \cite[Theorem 4.1]{Korevaar} (the version given below differs only by scaling). 

\begin{theorem} [Wiener-Ikehara]\label{thm.tauberian} 
Let $\mu=\mu(x)$ be a non-negative non-decreasing function on $\bbR$, vanishing for all $x\leq x_0$ with some $x_0>0$ and such that the Mellin-Stieltjes transform 
$$
g(s)=\int_{x_0}^\infty x^{-s}d\mu(x)
$$
exists (i.e. the integral converges absolutely) for $\Re s>s_0>0$. Suppose that for some $A>0$, the analytic function 
$$
g(s)-\frac{A}{s-s_0}, \quad \Re s>s_0
$$
has a continuous extension to the closed half-plane $\Re s\geq s_0$. Then 
$$
\mu(x)x^{-s_0}\to A/s_0, \quad x\to\infty.
$$
\end{theorem}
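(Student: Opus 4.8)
The plan is to give the classical Fourier-analytic (Fejér-kernel) proof of the Wiener--Ikehara theorem, following the line of argument in Korevaar's book.

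\emph{Step 1 (reduction to a Laplace transform on the line).} Substitute $x=e^u$ and set $S(u)=\mu(e^u)$, so $S$ is non-decreasing, finite-valued, and vanishes for $u$ below $\log x_0$. Integrating by parts --- the boundary term $e^{-sR}S(R)$ tends to $0$ for $\Re s>s_0$ by absolute convergence of $g$ there --- gives $g(s)/s=\int_{\bbR}S(u)e^{-su}\,du$ for $\Re s>s_0$. Using $\frac1{s-s_0}-\frac1s=\int_0^\infty(e^{s_0u}-1)e^{-su}\,du$ to subtract the pole of $g$ and absorbing a factor $e^{-s_0u}$ into the integrand, one obtains
$$
\int_{\bbR}W(u)\,e^{-wu}\,du=\Psi(w):=\frac{h(s_0+w)}{s_0+w},\qquad \Re w>0,
$$
an absolutely convergent integral, where $h(s)=g(s)-A/(s-s_0)$ and
$$
W(u)=S(u)e^{-s_0u}-\frac{A}{s_0}\bigl(1-e^{-s_0u}\bigr)\mathbf 1_{\{u\ge 0\}} .
$$
Since $S\ge 0$, the crucial one-sided bound $W(u)\ge -A/s_0$ holds for all $u$. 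Moreover, by hypothesis $h$ is continuous on $\{\Re s\ge s_0\}$, and since $s_0>0$ this makes $\Psi$ continuous on $\{\Re w\ge 0\}$. Writing $r(u)=S(u)e^{-s_0u}$, the goal is now to prove $r(u)\to A/s_0$ as $u\to\infty$.

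\emph{Step 2 (Fejér smoothing and the boundary limit).} Let $K_\lambda(u)=\dfrac{1-\cos\lambda u}{\pi\lambda u^2}$ be the Fejér kernel, so $K_\lambda\ge 0$, $\int_{\bbR}K_\lambda=1$, $K_\lambda(u)=\calO(1/(\lambda u^2))$, and its Fourier transform is $(1-\abs{t}/\lambda)_+$. For $\varepsilon>0$ the function $W(\cdot)e^{-\varepsilon\,\cdot}$ lies in $L^1$ (because $g$ converges absolutely at $s_0+\varepsilon$) with Fourier transform $t\mapsto\Psi(\varepsilon+it)$, so the convolution theorem gives
$$
\bigl(We^{-\varepsilon\cdot}*K_\lambda\bigr)(v)=\frac1{2\pi}\int_{-\lambda}^{\lambda}\Bigl(1-\frac{\abs{t}}{\lambda}\Bigr)\Psi(\varepsilon+it)\,e^{ivt}\,dt .
$$
I then let $\varepsilon\to0^+$. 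On the right, dominated convergence on the compact interval $[-\lambda,\lambda]$, using continuity of $\Psi$ up to the imaginary axis, gives the same integral with $\Psi(it)$. On the left, split $W=\bigl(W+\tfrac{A}{s_0}\mathbf 1_{\{u\ge 0\}}\bigr)-\tfrac{A}{s_0}\mathbf 1_{\{u\ge 0\}}$; the first summand is non-negative (immediate from $S\ge0$), so monotone convergence handles its convolution with $K_\lambda$, while dominated convergence handles the second. This produces the absolutely convergent identity
$$
(W*K_\lambda)(v)=\frac1{2\pi}\int_{-\lambda}^{\lambda}\Bigl(1-\frac{\abs{t}}{\lambda}\Bigr)\Psi(it)\,e^{ivt}\,dt ,
$$
whose right-hand side is the Fourier transform in $v$ of a continuous, compactly supported --- hence $L^1$ --- function of $t$, and so tends to $0$ as $v\to\infty$ by Riemann--Lebesgue. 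Undoing the definition of $W$ and using $\int_0^\infty(1-e^{-s_0u})K_\lambda(v-u)\,du\to 1$ as $v\to\infty$, we conclude that $(r*K_\lambda)(v)\to A/s_0$ as $v\to\infty$, for every fixed $\lambda>0$.

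\emph{Step 3 (Tauberian endgame) and the main obstacle.} Here monotonicity of $\mu$ is used. Fix $a>0$. For $\abs{u}\le a$, monotonicity of $S$ gives $r(v-a)e^{-2s_0a}\le r(v-u)\le r(v+a)e^{2s_0a}$. Integrating the lower bound against $K_\lambda$ over $[-a,a]$, using $(r*K_\lambda)(v)\to A/s_0$ and $\int_{-a}^aK_\lambda\to1$ as $\lambda\to\infty$, and then letting $a\to0$, gives $\limsup_{v\to\infty}r(v)\le A/s_0$; in particular $r$ is bounded. Feeding this boundedness into the upper estimate --- to bound $\int_{\abs{u}>a}r(v-u)K_\lambda(u)\,du\le(\sup r)\int_{\abs{u}>a}K_\lambda\to0$ as $\lambda\to\infty$ --- and arguing symmetrically yields $\liminf_{v\to\infty}r(v)\ge A/s_0$. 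Hence $r(v)\to A/s_0$, i.e. $\mu(x)x^{-s_0}\to A/s_0$. The delicate point is the passage $\varepsilon\to0^+$ in Step 2: there is no a priori integrability or even boundedness of $W$ (boundedness of $r(u)=S(u)e^{-s_0u}$ is essentially the conclusion), so one cannot simply apply dominated convergence to $We^{-\varepsilon\cdot}*K_\lambda$; the one-sided bound $W\ge-A/s_0$ --- the precise form in which non-negativity of $\mu$ is used --- rescues the argument through monotone convergence. Thus non-negativity and monotonicity of $\mu$ enter at two distinct essential junctures: the former legitimizes the boundary limit in Step 2, the latter drives the sandwich in Step 3.
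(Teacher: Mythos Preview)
The paper does not prove this theorem at all: it is quoted verbatim as a classical result with a reference to Korevaar and then used as a black box in Section~\ref{sec.d}. So there is nothing to compare your argument against on the paper's side.

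That said, your proof is a correct sketch of the classical Fourier-analytic argument (essentially Korevaar's own), and the places where you flag care are exactly the right ones. A couple of small remarks. In Step~1, the vanishing of the boundary term $e^{-sR}S(R)$ for $\Re s>s_0$ deserves one more sentence: absolute convergence at some $\sigma'$ with $s_0<\sigma'<\Re s$ gives $S(R)=\calO(e^{\sigma'R})$, hence $e^{-\sigma R}S(R)\to0$. In Step~2, your monotone-convergence justification for the non-negative part of $W$ tacitly treats only $u\ge0$; on $u<0$ the integrand is supported on the compact set $[\log x_0,0]$ and is bounded there, so dominated convergence suffices --- worth a half-line. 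In Step~3 the order of limits (first $v\to\infty$, then $\lambda\to\infty$, then $a\to0$) is correct; you might make it explicit that the $\limsup$ bound gives boundedness of $r$, which is then fed back into the tail estimate $\int_{\abs{u}>a}r(v-u)K_\lambda(u)\,du\le(\sup r)\int_{\abs{u}>a}K_\lambda$ for the $\liminf$ direction --- you do say this, but the dependence is easy to lose on a quick read.
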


We will eventually apply this theorem both to $g=f_+$ and $g=f_-$. 
Our auxiliary device will be the pair of functions 
$$
f(s)=f^+(s)+f^-(s), \quad h(s)=f^+(s)-f^-(s).
$$
Let us look at these two functions separately. 

\subsection{Analytic properties of $f(s)$ and $h(s)$}

For our next lemma, we recall that the Riemann zeta function $\zeta(s)$ defined for $\Re s>1$ by $\zeta(s) = \sum_{n=1}^\infty n^{-s}$, has the Euler product 
$$ \zeta(s) = \prod_p \Bigl(1-\frac1{p^s}\Bigr)^{-1}$$
in this region, and furthermore has an analytic continuation to $\bbC\setminus\{1\}$ with a simple pole at $s=1$ with residue 1. Furthermore, $\zeta(s)\ne 0$ on the line $\Re s\geq 1$. (See for example, \cite{Titchmarsh}.)

\begin{lemma}\label{prp.an}
The integrals \eqref{e4} converge absolutely for $\Re s>\frac1{\rho-t}$. Furthermore, with $f$ and $h$ as defined above, we have 
\begin{equation}
f(s) = \zeta((\rho-t)s)\widetilde{f}(s)\qquad \mbox{ and }\qquad h(s) = \frac{\widetilde{h}(s)}{\zeta((\rho-t)s)},
\label{e2}
\end{equation}
where $\widetilde{f}(s)$ and $\widetilde{h}(s)$ are analytic for $\Re s>s_1$ with some $s_1<\frac1{\rho-t}$.
\end{lemma}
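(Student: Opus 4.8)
The plan is to convert $f$ and $h$ into Euler-type products over primes, invoking the product formula of Theorem~\ref{thm.b2}, and then to peel off a factor $\zeta((\rho-t)s)^{\pm1}$ using the local asymptotics of Lemma~\ref{lma.b1}. First I would record the sign pattern implied by \eqref{b6}: $\sign(\lambda_k(E_p))$ equals $+1$ for $k=0$ and $-1$ for $k\ge1$, so that for $n=\prod_pp^{k_p}$ one has $\abs{\lambda_n(E)}=\prod_p\abs{\lambda_{k_p}(E_p)}$ and $\sign(\lambda_n(E))=\prod_p\sign(\lambda_{k_p}(E_p))$. Introducing the per-prime factors
$$
g_p(s)=\lambda_0^+(E_p)^s+\sum_{k=0}^\infty\lambda_k^-(E_p)^s,
\qquad
\widetilde g_p(s)=\lambda_0^+(E_p)^s-\sum_{k=0}^\infty\lambda_k^-(E_p)^s
$$
(both analytic for $\Re s>0$, the tails converging geometrically by \eqref{b5}), the terms of $\sum_n\abs{\lambda_n(E)}^{\Re s}$ are non-negative, so by Tonelli this sum equals $\prod_pg_p(\Re s)$ whenever it is finite; this legitimises the identities $f(s)=\prod_pg_p(s)$ and $h(s)=\prod_p\widetilde g_p(s)$ throughout the region of absolute convergence.

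The heart of the argument is the local expansion of $g_p$ and $\widetilde g_p$. Feeding \eqref{b3}, \eqref{b4a} and \eqref{b5} into the definitions, together with $x^s=1+\calO(x-1)$ as $x\to1$, one obtains, with all implied constants bounded on compact $s$-sets,
$$
g_p(s)=1+p^{-(\rho-t)s}+r_p(s),\qquad \widetilde g_p(s)=1-p^{-(\rho-t)s}+r_p(s),
$$
where $\abs{r_p(s)}\le C(s)\,p^{-\beta(\Re s)}$ with $\beta(\sigma)=\min\{\rho-t,\ (\rho-t)\sigma+\delta,\ (2\rho-t)\sigma\}$ and $\delta=(\rho-t)\wedge t$ as in \eqref{b4a}; the three entries of the minimum come, respectively, from $\lambda_0^+(E_p)^s-1$, from $\lambda_0^-(E_p)^s-p^{-(\rho-t)s}$, and from $\sum_{k\ge1}\lambda_k^-(E_p)^s$. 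The decisive point is that $\beta(\sigma)>1$, and also $2(\rho-t)\sigma>1$, for every $\sigma>s_1$, where
$$
s_1=\max\Bigl\{\tfrac{1-\delta}{\rho-t},\ \tfrac1{2\rho-t},\ \tfrac1{2(\rho-t)}\Bigr\},
$$
and each of these three numbers lies strictly between $0$ and $\tfrac1{\rho-t}$ — the first because $\delta>0$ (this is exactly where the hypothesis $t>0$ enters), the other two because $\rho>t>0$. Hence $0<s_1<\tfrac1{\rho-t}$.

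Granting these estimates, the rest is bookkeeping. For $\Re s>\tfrac1{\rho-t}$ we have $(\rho-t)\Re s>1$ and $\beta(\Re s)>1$, so $\sum_p(g_p(\Re s)-1)$ converges absolutely and $f(\Re s)=\prod_pg_p(\Re s)<\infty$; since $f(\Re s)=\sum_n\lambda_n^+(E)^{\Re s}+\sum_n\lambda_n^-(E)^{\Re s}$, both integrals \eqref{e4} converge absolutely on $\Re s>\tfrac1{\rho-t}$, which is the first assertion. Next, for $\Re s>s_1$ set
$$
\widetilde f(s)=\prod_pg_p(s)\bigl(1-p^{-(\rho-t)s}\bigr),\qquad
\widetilde h(s)=\prod_p\frac{\widetilde g_p(s)}{1-p^{-(\rho-t)s}}.
$$
From the expansion above, $g_p(s)(1-p^{-(\rho-t)s})-1=-p^{-2(\rho-t)s}+\calO(p^{-\beta(\Re s)})$ and $\widetilde g_p(s)(1-p^{-(\rho-t)s})^{-1}-1=\calO(p^{-\beta(\Re s)})$, the latter using $\abs{1-p^{-(\rho-t)s}}\ge1-2^{-(\rho-t)s_1}>0$ for $\Re s\ge s_1$ (where $s_1>0$ is used); both error series are absolutely summable, locally uniformly in $\{\Re s>s_1\}$. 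As the finitely many small primes contribute only a finite product of functions analytic for $\Re s>0$, the standard theorem on infinite products of analytic functions shows $\widetilde f$ and $\widetilde h$ are analytic on $\Re s>s_1$. Finally, on $\Re s>\tfrac1{\rho-t}$ the Euler product $\zeta((\rho-t)s)=\prod_p(1-p^{-(\rho-t)s})^{-1}$ converges absolutely, and multiplying the three convergent products gives $f(s)=\zeta((\rho-t)s)\widetilde f(s)$ and $h(s)=\widetilde h(s)/\zeta((\rho-t)s)$, which are exactly \eqref{e2}.

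The main obstacle is the local analysis of the second paragraph: one must squeeze out of the three $\calO$-bounds of Lemma~\ref{lma.b1} a single effective error exponent $\beta(\Re s)$ that stays $>1$ on a half-plane reaching strictly to the left of $\tfrac1{\rho-t}$, and one must do so with all implied constants bounded on compact $s$-sets, so that the analyticity of $\widetilde f$ and $\widetilde h$ is not lost in the limit. It is precisely the extra decay $\delta=(\rho-t)\wedge t>0$ recorded in \eqref{b4a} — the quantitative imprint of $t>0$ — that provides the margin $s_1<\tfrac1{\rho-t}$; with $\delta=0$ one would only reach $s_1=\tfrac1{\rho-t}$ and the statement would be empty.
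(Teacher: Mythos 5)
Your proposal is correct and follows essentially the same route as the paper: use the product formula of Theorem~\ref{thm.b2} to express $f$ and $h$ as Euler products of local factors $f_p,h_p$, estimate these via Lemma~\ref{lma.b1}, and peel off $(1-p^{-(\rho-t)s})^{\mp1}$ to isolate $\zeta((\rho-t)s)^{\pm1}$, with the residual product converging on a half-plane $\Re s>s_1<\frac1{\rho-t}$ thanks to $\delta>0$. A minor point worth noting: your bound $\calO(p^{-(2\rho-t)\Re s})$ for the tail $\sum_{k\ge1}\lambda_k^-(E_p)^s$ is the correct one, obtained from \eqref{b5} with the leading index $k=1$; the paper's written exponent $\calO_s(p^{-2s\rho}p^{-s(\rho-t)})$ appears to carry a small indexing slip, but in either case this term is dominated by the $2(\rho-t)\Re s$ constraint, so the resulting $s_1$ and the conclusion are unaffected.
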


\begin{proof}
We start with $f(s)$. For $p$ prime and $\Re s>0$, we define 
\[
f_p(s)=\sum_{k=0}^\infty \abs{\lambda_k(E_p)}^s. 
\]
We recall that by Lemma~\ref{lma.b1}, we have the exponential convergence $\lambda_k(E_p)\to0$ as $k\to\infty$, and therefore the series above converge absolutely for all $\Re s>0$. 
Let us use Lemma~\ref{lma.b1} to estimate $f_p(s)$, with the simplified notation \eqref{b4a}. 
We have 
\begin{align*}
f_p(s)=&(1+\calO(p^{-\rho+t}))^s+p^{-s(\rho-t)}(1+\calO(p^{-\delta}))^s
\\
&+\sum_{k=2}^\infty \calO(p^{-s\rho k-s(\rho-t)})(1+\calO(p^{-\delta}))^s
\\
=&1+p^{-s(\rho-t)}+\calO_s(p^{-\rho+t})+\calO_s(p^{-\delta}p^{-s(\rho-t)})+\calO_s(p^{-2s\rho}p^{-s(\rho-t)}),
\end{align*}
where $\calO_s$ means that the constants in the estimates are uniform in $s$ over compact sets in the half-plane $\Re s>0$. 
We rewrite this as 
\begin{align*}
f_p(s)=&\frac1{1-p^{-s(\rho-t)}}\widetilde f_p(s), 
\\
\widetilde f_p(s)=&1+\calO_s(p^{-\rho+t})+\calO_s(p^{-2s(\rho-t)})+\calO_s(p^{-\delta}p^{-s(\rho-t)})
\\
&+\calO_s(p^{-2s\rho}p^{-s(\rho-t)}).
\end{align*}
Now let us discuss the convergence of the infinite product
$$
\prod_{p\text{ prime}}f_p(s)
=
\prod_{p\text{ prime}}(1-p^{-s(\rho-t)})^{-1} \prod_{p\text{ prime}} \widetilde f_p(s).
$$
By the Euler product representation for the Riemann Zeta function, the first product in the right hand side converges absolutely to $\zeta((\rho-t)s)$ for $\Re s>1/(\rho-t)$. 
By the above estimates for $\widetilde f_p$, the second product in the right hand side converges absolutely in the half-plane where 
$$
2(\rho-t)\Re s>1, \quad
\delta+(\rho-t)\Re s>1,\quad
2\rho\Re s+(\rho-t)\Re s>1.
$$
This is the half-plane $\Re s>s_1$, where
$$
s_1 = \max\Bigl\{ \frac1{2(\rho-t)}, \frac{1-\delta}{\rho-t},\frac1{3\rho-t}\Bigr\}<\frac1{\rho-t}.
$$
Denoting 
$$
\widetilde f(s)=\prod_{p\text{ prime}} \widetilde f_p(s), \quad \Re s>s_1,
$$
we find that 
$$
\prod_{p\text{ prime}}f_p(s)=\zeta((\rho-t)s)\widetilde f(s),
$$
where the infinite product in the left hand side converges absolutely for $\Re s>1/(\rho-t)$ and $\widetilde f$ is analytic in the half-plane $\Re s>s_1$.

Now let $\{\lambda_n(E)\}_{n=1}^\infty$ be the enumeration of the eigenvalues of $E$ (both positive and negative) from Theorem~\ref{thm.b2}. Then from the eigenvalue product formula of Theorem~\ref{thm.b2} we find 
\[
f(s)=\sum_{n=1}^\infty \abs{\lambda_n(E)}^s=\prod_{p\text{ prime}}f_p(s), 
\]
where by the above analysis the series converges absolutely in the half-plane $\Re s>1/(\rho-t)$. We have proved the required statements for $f(s)$.

Now consider $h(s)$. This time we have
$$
h(s)=
\sum_{n=1}^\infty \sign(\lambda_n(E))\abs{\lambda_n(E)}^s, 
$$
where by the first part of the proof the series converges absolutely for $\Re s>1/(\rho-s)$. 
From the product formula of Theorem~\ref{thm.b2}, we find
$$
\sign(\lambda_n(E))\abs{\lambda_n(E)}^s
=\prod_{p \text{ \rm  prime}} 
\sign(\lambda_{k_p}(E_p))
\abs{\lambda_{k_p}(E_p)}^s,
$$
and therefore 
$$
h(s)=\prod_{p\text{ prime}}h_p(s), 
\quad
h_p(s)=\sum_{k=0}^\infty 
\sign(\lambda_{k}(E_p))
\abs{\lambda_{k}(E_p)}^s.
$$
Now we consider $h_p(s)$ in a very similar manner to the consideration of $f_p(s)$ above; the only difference will be a sign change in front of the $p^{-s(\rho-t)}$ term. 
We have 
\begin{align*}
h_p(s)=&(1+\calO(p^{-\rho+t}))^s-p^{-s(\rho-t)}(1+\calO(p^{-\delta}))^s
\\
&+\sum_{k=2}^\infty \calO(p^{-s\rho k-s(\rho-t)})(1+\calO(p^{-\delta}))^s
\\
=&1-p^{-s(\rho-t)}+\calO_s(p^{-\rho+t})+\calO_s(p^{-\delta}p^{-s(\rho-t)})+\calO_s(p^{-2s\rho}p^{-s(\rho-t)}),
\end{align*}
and therefore 
\begin{align*}
h_p(s)=&(1-p^{-s(\rho-t)})\widetilde h_p(s), 
\\
\widetilde h_p(s)=&1+\calO_s(p^{-\rho+t})+\calO_s(p^{-2s(\rho-t)})+\calO_s(p^{-\delta}p^{-s(\rho-t)})
\\
&+\calO_s(p^{-2s\rho}p^{-s(\rho-t)}).
\end{align*}
It follows that for $\Re s>1/(\rho-t)$ we have 
$$
h(s)=\prod_{p\text{ prime}}(1-p^{-s(\rho-t)}) \prod_{p\text{ prime}} \widetilde h_p(s)=\frac{\widetilde h(s)}{\zeta((\rho-t) s)}, \quad 
\widetilde h(s)=\prod_{p\text{ prime}} \widetilde h_p(s).
$$
As in the case for $\widetilde f(s)$, we see that $\widetilde h(s)$ is analytic in the half-plane $\Re s>s_1$. 
\end{proof}

From this lemma it is easy to deduce the required analyticity properties of $f$ and $h$. Recall that the function
\[
\zeta(s)-\frac1{s-1}
\]
is entire. Substituting this into the representation \eqref{e2} for $f(s)$, we find that 
\[
f(s)-\frac{\widetilde f(1/(\rho-t))}{(\rho-t)s-1}
\]
is analytic in the half-plane $\Re s>s_1$. 

For  $h(s)$, the argument is slightly more subtle. 
The fact that $\zeta(s)\ne 0$ for $\Re s\ge 1$ implies that $h(s)$ has an analytic continuation to the closed half-plane $\Re s\geq1/(\rho-t)$.
In contrast with the case of $f(s)$, there is no pole at $s=1/(\rho-t)$. 
Note that if it was known that $\zeta(s)\ne 0$ in a strip to the left of the 1-line, we could further deduce that $h(s)$ has an analytic continuation to a larger half-plane.  

\subsection{Putting this together}
Now we come back to the functions
$$
f^+(s)=\frac12(f(s)+h(s)) \quad\text{ and }\quad f^-(s)=\frac12(f(s)-h(s)).
$$
From the above analysis we find that, for both signs $+$ and $-$, 
$$
f^\pm(s)-\frac12\frac{\widetilde f(1/(\rho-t))}{(\rho-t)s-1}
$$
is analytic in the closed half-plane $\Re s\ge\frac1{\rho-t}$. In particular, $f^\pm$ have the same residues at $s=\frac1{\rho-t}$. 

 Applying the Wiener-Ikehara theorem, we find that 
\[
\mu^\pm(x) x^{-1/(\rho-t)}\to \frac12\widetilde f(1/(\rho-t)) \text{ as }x\to\infty.
\]
This proves \eqref{e1} and therefore Theorem~\ref{thm.a0} with 
$$
\varkappa=\Bigl(\frac12\widetilde f(1/(\rho-t))\Bigr)^{\rho-t}.
$$
Note that 
$$
\widetilde f(1/(\rho-t)) = \prod_{p\text{ prime}} \widetilde f_p(1/(\rho-t)) = \prod_{p\text{ prime}}\Bigl(1-\frac{1}{p}\Bigr)\sum_{k=0}^\infty \abs{\lambda_k(E_p)}^{1/(\rho-t)} > 0,
$$
as each term in the product is positive. Finally, for the exponent $\varkappa$ we find
\begin{equation}
\varkappa=2^{-\rho+t} \prod_{p\text{ prime}}\Bigl(1-\frac{1}{p}\Bigr)^{\rho-t}\sum_{k=0}^\infty \abs{\lambda_k(E_p)}.
\label{d5}
\end{equation}

\section{Connection with Beurling prime systems}\label{sec.e}

Here, without going into technical details, we discuss the connection to Beurling (or generalised) prime systems. While this connection may be interesting in its own right, it also allows one to improve the error terms in the eigenvalue asymptotics of Theorem~\ref{thm.a0}.

Briefly, a Beurling prime system $\mathcal{P}$ consists of a sequence of real numbers $1 < r_1 \le\cdots \le r_n \to\infty$ (called Beurling primes) and all possible products of powers of these (called Beurling integers), say, $1 = n_1 < n_2 \le \cdots$. One associates with $\mathcal{P}$ the Beurling zeta function 
$$ \zeta_\mathcal{P}(s)  = \prod_{k=1}^\infty \frac1{1-r_k^{-s}} = \sum_{k=1}^\infty \frac1{n_k^s}.$$
See, e.g., \cite{DZ} for the details. 

First, let us briefly recall what happens in the case $t<0$ from \cite{HP1}. 
Let $\{\lambda_n(E)\}_{n=1}^\infty$ be the enumeration of the eigenvalues of $E$ (both positive and negative) from Theorem~\ref{thm.b2}. 
Define
$$ \gamma_n = \frac{\lambda_n(E)}{\lambda_1(E)} \quad (n\in\bbN) \quad \mbox{ and }  \quad \gamma_{k,p} = \frac{\lambda_k(E_p)}{\lambda_0(E_p)} \quad (k\in\bbN_0)$$
and view $\mathcal{P}=\{ \gamma_{1,p}^{-\frac1{\rho}}\}_{p \,{\rm prime}}$ as a sequence of Beurling primes. It was shown that 
\begin{equation}
\gamma_{1,p}^{-\frac1{\rho}} = p(1 + \calO(p^{-\delta}))
\label{f1}
\end{equation}
for some $\delta>0$ and $f(s) = \zeta_\mathcal{P}(\rho s)\widetilde{g}(s)$, where $\widetilde{g}$ is analytic and bounded in some half-plane containing $\frac1{\rho}$. Thus the asymptotics of the eigenvalues $\lambda_n(E)$ is essentially given by the asymptotics of the Beurling integers associated to $\mathcal{P}$. By a result of Balazard \cite[Theorem 9]{Balazard}, the counting function of the corresponding Beurling integers satisfies
\begin{equation}
 N(x):=\sum_{n_k \le x}1 = c_1x(1 + \calO(e^{-c_2\sqrt{\log x \log \log x}}))
\label{e3}
\end{equation}
for some $c_1,c_2 > 0$. Rescaling gives $\lambda_n(E) = \frac{\varkappa}{n^\rho}(1+\calO(e^{-c_3\sqrt{\log n\log\log n}}))$ for some $c_3>0$. Furthermore, we remark here that obtaining an error bound of order $\calO(n^{-\eta})$ for some $\eta>0$ is equivalent to having $\zeta_\mathcal{P}$ being of finite order in some strip to the left of $\Re s=1$. 
In general this is not expected to follow simply from the fact that the Beurling primes satisfy \eqref{f1}. For example, even in the case of  the system $\mathcal{P}_0$ whose Beurling primes are $p-1$ (for $p$ an odd prime), the error $\calO(x^{-\eta})$ is not expected to hold in \eqref{e3} (see \cite{Pom}). To explain briefly: with $\phi(n)$ denoting Euler's totient function, one has (by multiplicativity)
$$ \sum_{n=1}^\infty \frac1{\phi(n)^s} = \prod_p \sum_{k=0}^\infty \frac1{\phi(p^k)^s} = \prod_p\biggl(1+\frac1{(p-1)^s}+\cdots\Biggr) = \zeta_{\mathcal{P}_0}(s)H(s),$$
where $H(s)$ is holomorphic and bounded for $\Re s\ge \frac12+\delta$ for every $\delta>0$. If $\zeta_{\mathcal{P}_0}$ has finite order in a strip to the left or $\Re s=1$, then it would follow that 
$$ \sum_{\phi(n)\le x} 1 = cx + \calO(x^\alpha)$$
for some $c>0$ and $\alpha<1$. But the error in the above is conjectured to be $\Omega(x\exp\{-(1+\epsilon)\log x\log\log\log x/\log\log x\})$ for every $\epsilon>0$ (see \cite{Pom}, page 85 near the end of \S1) and hence certainly is $\Omega(x^{1-\epsilon})$. In other words, we need something more than just \eqref{f1} to have an error $\calO(n^{-\eta})$.

We note that \eqref{f1} implies that $\zeta_\mathcal{P}$ and $\zeta$ have the same zeros in the half plane where $\Re s>1-\delta$ (see \cite{GS}, Theorem 1 and the subsequent remark).

For the present case ($t>0$), we take $\mathcal{P}=\{ |\gamma_{1,p}|^{-1/(\rho-t)}\}_{p \,{\rm prime}}$ as a sequence of Beurling primes. We have
$$ \bigl|\gamma_{1,p}\bigr|^{-\frac1{\rho-t}} = p(1 + \calO(p^{-\delta}))$$
for some $\delta>0$. Following the same procedure, we find that
$$ f(s) = \zeta_\mathcal{P}((\rho-t)s)f_1(s) \quad\mbox{ and }\quad h(s) = \frac{h_1(s)}{\zeta_\mathcal{P}((\rho-t)s)},$$
where $f_1$ and $h_1$ are analytic and bounded in some half-plane containing $\frac1{\rho-t}$. Indeed, we start by observing that $\lambda_0(E_p)>0$ for all $p$ and as a consequence $\lambda_1(E)>0$. 
Further, we find
\begin{align*}
f(s) &= \lambda_1(E)^s\sum_{n=1}^\infty |\gamma_n|^s = \lambda_1(E)^s\prod_{p\text{ prime}} \biggl(1+\sum_{k=1}^\infty |\gamma_{k,p}|^s\biggr)
\\
& = \lambda_1(E)^s\prod_{p\text{ prime}} \frac1{1-|\gamma_{1,p}|^s}\prod_{p\text{ prime}} (1-|\gamma_{1,p}|^s)\biggl(1+\sum_{k=1}^\infty |\gamma_{k,p}|^s\biggr) 
\\
&= \zeta_\mathcal{P}((\rho-t)s)f_1(s),
\end{align*}
and similarly
\begin{align*}
h(s) &= \lambda_1(E)^s\sum_{n=1}^\infty \sign(\gamma_n)|\gamma_n|^s 
= 
\lambda_1(E)^s\prod_{p\text{ prime}} \biggl(1-\sum_{k=1}^\infty|\gamma_{k,p}|^s\biggr)
\\
&= \lambda_1(E)^s\prod_{p\text{ prime}}  (1-|\gamma_{1,p}|^s)\prod_{p\text{ prime}}\frac1{1-|\gamma_{1,p}|^s}\biggl(1-\sum_{k=1}^\infty |\gamma_{k,p}|^s\biggr) 
\\
&= \frac{h_1(s)}{\zeta_\mathcal{P}((\rho-t)s)}.
\end{align*}
As a result, by the application of  \cite[Theorem 9]{Balazard}, the asymptotic relation \eqref{e3} again holds, which implies
$$ \mu^+(x)+\mu^-(x) = cx^{\frac{1}{\rho-t}}(1+\calO(e^{-c_2\sqrt{\log x \log \log x}})).$$
For $h$, we require the knowledge of
$$ M(x):= \sum_{n_k\le x} \mu_{\mathcal{P}}(n_k),$$
where $\mu_\mathcal{P}(n_k)=(-1)^N$ if $n_k$ is a product of $N$ distinct Beurling primes and zero otherwise, generalising the M\"{o}bius function. By the same methods as in \cite{Balazard}, one can show that  $M(x)=\calO(xe^{-c\sqrt{\log x \log \log x}})$ for some $c>0$. Thus, also 
\[
\mu^+(x)-\mu^-(x) = \calO(x^{\frac{1}{\rho-t}}e^{-c_2\sqrt{\log x \log \log x}})
\] 
and the error terms in Theorem~\ref{thm.a0} are of the form $\calO(e^{-c\sqrt{\log n \log \log n}})$. 

To have an error of the form $\calO(n^{-\eta})$ however, requires that $\zeta_\mathcal{P}(s)\ne 0$ in a strip to the left of $\Re s=1$ (or somehow  all such zeros are cancelled by zeros of $h_1$). As we noted that $\zeta_\mathcal{P}$ and $\zeta$ have the same zeros in a strip to the left of $\Re s=1$, this is equivalent to $\zeta(s)\ne 0$ in such a strip; i.e. a weak version of the Riemann Hypothesis.



\begin{thebibliography}{99}


\bibitem{Balazard}
M. Balazard, 
\emph{La version de Diamond de la m\'{e}thode de l'hyperbole de Dirichlet,} 
L'Enseignement Math\'{e}matique \textbf{45} (1999), 253--270.

\bibitem{BirmanSolomyak}
M.~S.~Birman, M.~Z.~Solomyak, 
\emph{Spectral theory of self-adjoint operators in Hilbert space,}
D. Reidel Publishing Co., Dordrecht, 1987. 

\bibitem{BL}
K. Bourque and S. Ligh, 
\emph{Matrices associated with  arithmetical functions,} 
Linear and Multilinear Algebra, \textbf{34} (1993), 261--267.

\bibitem{DZ}
H. Diamond, W.-B. Zhang, 
\emph{Beurling Generalized Numbers,} 
Mathematical Surveys and Monographs Vol. 213, AMS, Rhode Island, 2016.

\bibitem{GS}
E. Grosswald and F. J. Schnitzer, 
\emph{A class of modified $\zeta$ and $L$-functions,} 
Pacific J. Math. \textbf{74} (1978), 357--364.

\bibitem{Hauk}
P. Haukkanen, 
\emph{An upper bound for the $l_p$-norm of a GCD-related matrix,} 
J. Inequal. Appl. (2006), 1--6.

\bibitem{H3} 
T.~Hilberdink, 
\emph{An arithmetical mapping and applications to $\Omega$-results for the Riemann zeta function,} 
Acta Arithmetica \textbf{139} (2009), no. 4, 341--367.

\bibitem{H2}
T.~Hilberdink, 
\emph{Singular Values of Multiplicative Toeplitz matrices,} 
Linear and Multilinear Algebra {\bf 65} (2017), 813--829.

\bibitem{H}
T.~Hilberdink,
\emph{Matrices with multiplicative entries are tensor products,} 
Linear Algebra and its Applications, \textbf{532} (2017), 179--197.


\bibitem{HP1}
T.~Hilberdink, A.~Pushnitski,
\emph{Spectral asymptotics for a family of LCM matrices,}
 \emph{St. Petersburg Math. Journal} {\bf 34} (2023), 463--481.

\bibitem{HL}
S. Hong and R. Loewy, 
\emph{Asymptotic behavior of eigenvalues of greatest common divisor matrices,}
Glasg. Math. J. {\bf 46} (2004), 551--569.

\bibitem{HEL}
S. Hong and K. S. Enoch Lee, 
\emph{Asymptotic behavior of eigenvalues of reciprocal power LCM matrices,}
Glasg. Math. J. {\bf 50} (2008), 163--174.


\bibitem{Korevaar}
J.~Korevaar, 
\emph{A century of complex Tauberian theory,}
Bull. Amer. Math. Soc. (N.S.) \textbf{39}, no.4 (2002),  475--531. 

\bibitem{LS}
P.~Lindqvist, K.~Seip,
\emph{Note on some greatest common divisor matrices,}
Acta Arith. \textbf{84}, no.2  (1998), 149--154.

\bibitem{MH}
M. Mattila and P. Haukkanen, 
\emph{On the eigenvalues of certain number-theoretic matrices,}
East-West J. Math. {\bf 14} (2012), 121--130.

\bibitem{Pom}
C.~Pomerance, 
\emph{Popular values of Euler's function,}
Mathematika {\bf 27} (1980), 84--89.

\bibitem{Smith}
H.~J.~S.~Smith, 
\emph{On the value of a certain arithmetical determinant,} 
Proc. London Math. Soc. \textbf{7} (1875-1876), 208--212.

\bibitem{Titchmarsh} 
E.~C.~Titchmarsh, 
\emph{The Theory of the Riemann Zeta-function} (second edition), Oxford University Press, 1986.


\end{thebibliography}
\end{document}